\newtheorem{thm}{Theorem}[section]
\newtheorem{prop}[thm]{Proposition}
\newtheorem{lemma}[thm]{Lemma}
\newtheorem{defn}[thm]{Definition}
\newtheorem{rem}[thm]{Remark}
\newtheorem{rems}[thm]{Remarks}
\newtheorem{exa}[thm]{Example}
\newcommand{\bbR}{\mathbb{R}}
\newcommand{\bbT}{\mathbb{T}}
\newcommand{\bbC}{\mathbb{C}}
\newcommand{\bbD}{\mathbb{D}}
\newcommand{\bbZ}{\mathbb{Z}}
\newcommand{\bbH}{\mathbb{H}}
\newcommand{\ip}{\cdot}
\newcommand{\mV}{{\mathcal V}}
\newcommand{\mU}{{\mathcal U}}
\newcommand{\bz}{{\bf z}}
\newcommand{\bx}{{\bf x}}
\newcommand{\R}{\mathbb R}
\newcommand{\al}{{\alpha}}
\DeclareMathOperator{\Real}{Re}
\DeclareMathOperator{\Imag}{Im}
\DeclareMathOperator{\spa}{Span}
\DeclareMathOperator{\Hess}{Hess}
\newcommand{\ov}{\overline}
\newcommand{\op}{{\overline{\partial}}}
\newcommand{\p}{{\partial}}
\newcommand{\cp}{{\mathbb{CP}}}
\begin{document}
\title[Uniqueness among scalar-flat K\"ahler toric metrics]{Uniqueness among scalar-flat K\"ahler metrics on non-compact toric $4$-manifolds}
\author{Rosa Sena-Dias}
\address{Centro de An\'{a}lise Matem\'{a}tica, Geometria e Sistemas Din\^{a}micos, Departamento de Matem\'atica, Instituto Superior T\'ecnico}
\email{rsenadias@math.ist.utl.pt}
\thanks{This work was partially supported by FCT/Portugal through project PTDC/MAT-GEO/1608/2014}
\begin{abstract}
In \cite{as}, the authors construct two distinct families of scalar-flat K\"ahler non-compact toric metrics using Donaldson's rephrasing of Joyce's construction in action-angle coordinates. In this paper and using the same set-up, we show that these are the only J-complete scalar-flat K\"ahler metrics on any given strictly unbounded toric surface. We also show that the asymptotic behaviour of such a metric determines it uniquely.
\end{abstract}

\maketitle
\section{Introduction}
Based on work of Donaldson's (see \cite{d1}) which in turn builds on an important construction of Joyce giving local models for  scalar-flat K\"ahler metrics with torus symmetry, Abreu and the author have constructed two distinct types of scalar-flat K\"ahler toric metrics on strictly unbounded toric symplectic $4$-folds. Namely
\begin{itemize}
\item An ALE scalar-flat K\"ahler toric metric which had been previously written down in \cite{cs}.
\item A family of so-called Donaldson generalised Taub-NUT metrics which are all complete scalar-flat K\"ahler metrics. Some of these metrics are Ricci-flat and were previously known (see \cite{l1}). Others are actually not Ricci-flat and are new.
\end{itemize}
The construction uses action-angle coordinates and gives the so-called symplectic potentials of the metrics. In \cite{d1}, Donaldson explains how to establish a local correspondence between 
\begin{enumerate}
\item Solutions to a non-linear PDE describing symplectic potentials of scalar-flat K\"ahler metrics.
\item Pairs of solutions to a familiar linear PDE describing axi-symmetric harmonic functions on $\bbR^3$ which give a local diffeomorphism from $\bbR^2$ to $\bbR^2$.
\end{enumerate}
\cite{as} uses only one side of this correspondence namely that a pair of axi-symmetric harmonic functions on $\bbR^3$ determines the local symplectic potential of a scalar-flat K\"ahler metric. It is natural to ask if the above correspondence can be used to prove some uniqueness results for scalar-flat K\"ahler metrics on strictly unbounded symplectic $4$-folds. The goal of this paper is to show that indeed one can use the correspondence to prove that the metrics constructed in \cite{as} are essentially the only scalar-flat K\"ahler metrics one can construct on strictly unbounded toric $4$-folds. More precisely we prove the following theorems.
\begin{thm}\label{unique}
Let $(X,\omega)$ be a strictly unbounded toric $4$-manifold and $J$ a complete, compatible complex structure which is torus invariant. Let $g$ be the corresponding toric metric determined by $\omega$ and $J$. If $g$ is scalar-flat then either
\begin{enumerate}
\item $g$ is the ALE toric scalar-flat K\"ahler metrics on $X$ or
\item $g$ is a Donaldson generalised Taub-NUT metric.
\end{enumerate}
\end{thm}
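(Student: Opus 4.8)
The plan is to move entirely into action-angle coordinates and reduce the classification to a statement about axi-symmetric harmonic functions on $\bbR^3$, exploiting \emph{both} directions of Donaldson's correspondence rather than just the construction direction used in \cite{as}. First I would encode the data $(X,\omega,J)$ by its symplectic potential $u$ on the moment polytope $P$, recalling that $(X,\omega)$ fixes $P$ once and for all, so that varying the compatible torus-invariant $J$ amounts to varying $u$. By Abreu's formula the scalar-flat condition becomes the nonlinear Abreu equation $\sum_{i,j}\partial_i\partial_j u^{ij}=0$ for the inverse Hessian $u^{ij}$, while smoothness of $g$ up to the toric boundary imposes the standard Guillemin boundary conditions along the edges and vertices of $P$. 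The completeness of $J$ and the strict unboundedness of $X$ must then be recorded as conditions on the recession structure of $P$ and on the growth of $u$ at infinity and along the unbounded edges.

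Next I would apply Donaldson's correspondence to pass from this nonlinear problem to its linear avatar. The essential point is that a solution $u$ of the Abreu equation determines, through the hodograph-type change of variables, a pair $(\phi,\psi)$ of functions harmonic for the axially symmetric Laplacian $\partial_{rr}+r^{-1}\partial_r+\partial_{zz}$ on the half-plane $\{r\geq 0\}$, the associated map $\bbR^2\to\bbR^2$ being the one that furnishes the symplectic potential. Under this dictionary the edges of $P$ correspond to prescribed segments of the $z$-axis along which the harmonic data obey linear conditions fixed by the primitive edge normals, so that the combinatorial data of the fan of $X$ is converted into a definite pattern of boundary values on the axis.

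The heart of the proof is then a uniqueness statement for this linear boundary value problem. Axi-symmetric harmonic functions with isolated axis singularities are superpositions of fundamental solutions $|x-p|^{-1}$ together with a globally regular harmonic remainder; the Guillemin conditions at the vertices of $P$ pin down the location and strength of the admissible singularities in terms of the edge normals, completeness of $J$ forbids any further singularity, and strict unboundedness together with a growth estimate controls the remainder. I would show that, after subtracting the explicit singular contributions, the surviving globally defined axi-symmetric harmonic function is forced, by a Liouville-type argument adapted to the half-plane with these boundary conditions, to be constant; the value of that constant then distinguishes the asymptotically locally Euclidean case (vanishing remainder, giving the ALE metric) from the Donaldson generalised Taub-NUT family (nonzero constant, playing the role of the mass parameter). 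Matching the two resulting normal forms for $(\phi,\psi)$ with the families of \cite{as} finishes the argument.

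The main obstacle I anticipate is globalization together with the completeness input. Donaldson's correspondence is a priori local, so the delicate step is to prove that the harmonic functions extend to the full half-plane, equivalently that the hodograph map is a \emph{global} diffeomorphism onto its image, and that J-completeness translates faithfully into both the absence of spurious axis singularities and the precise decay needed to close the Liouville argument. Making rigorous how completeness constrains the asymptotics at the ends of the unbounded edges, and thereby excluding putative complete scalar-flat metrics whose harmonic data would involve infinitely many charges or anomalous growth, is where the real difficulty lies; by contrast the combinatorial bookkeeping of the fan and the final matching with the two known families should be comparatively routine once this analytic control is secured.
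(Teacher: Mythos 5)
Your overall strategy is the one the paper (following Wright) actually pursues, but two of your key steps are wrong as stated, and they are precisely where the real content lies. First, your description of the linear model is off: the singularities of $\xi=(\xi_1,\xi_2)$ are not isolated point charges superposing fundamental solutions $|x-p|^{-1}$; rather $\xi$ blows up like $\nu_i\log r$ along the \emph{entire} segment of the axis between consecutive vertex images (line-charge/logarithmic singularities, as in the explicit $\xi_{ALE}$ with its $\log r$ and $\log(H_i+\rho_i)$ terms). More seriously, the boundary conditions do \emph{not} ``pin down the location'' of the singular data in terms of the edge normals: the points $a_i$ on the axis to which the vertices of $P$ are sent depend a priori on the unknown metric. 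The paper must prove they are metric-independent, and does so by a symplectic-volume computation: integrating $\omega$ over the invariant sphere $E_i$ in $(H,r)$ coordinates gives $a_{i+1}-a_i=\text{length}(e_{i+1})/(2\pi|\nu_i|^2)$, determined by $P$ alone. Without this lemma, subtracting your ``explicit singular contributions'' (equivalently, subtracting $\xi_{ALE}$) leaves a difference that is still singular along the axis, and the entire reduction to a regular harmonic remainder collapses.

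Second, your Liouville step would prove too much: the Donaldson generalised Taub-NUT deviation is $\xi_0=H\nu$, \emph{linear} in $H$, so after removing the singular parts the remainder is neither bounded nor constant, and a ``bounded axisymmetric harmonic function is constant'' argument at the level of $\xi$ would wrongly force $g$ to be ALE (or simply never close, since no boundedness is available). The paper's fix is structural: it works with $\mu_0=\mu_{ALE}-\mu$ and $f=\mu_0/r^2$, shows (Wright's lemma) that $f$ satisfies $f_{HH}+f_{rr}+3f_r/r=0$, i.e.\ is an axisymmetric harmonic function on $\bbR^5$, and gets the needed growth control for free from positivity of the moment map: $\mu\ip\nu_1\geq 0$ gives the one-sided bound $f\ip\nu_1\leq C|w|/r^2$, and the mean-value property on large spheres in $\bbR^5$ combined with the estimate $\int_{\partial B(R)}r^{-2}\,dw\leq CR^2$ shows $f\ip\nu_1$ is bounded above, hence constant; constancy of $f$ then yields exactly $\mu_0=r^2v$, i.e.\ $\xi_0=H\nu$, with the cone condition $\det(\nu,\nu_1),\det(\nu,\nu_d)>0$ following from $\mu\ip\nu_1>0$ at infinity. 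Your proposal contains no substitute for this mechanism, and the ``growth estimate'' you invoke is exactly what is missing. Finally, you correctly flag globalization as the hard analytic input, but note it consumes most of the paper: injectivity of $\bz=H+ir:P\rightarrow\bbH$ uses a holomorphic extension of $\bz$ across $\partial P$ via the orientable double cover of the real form $X_\bbR$ and an argument-principle degree count, while surjectivity uses uniformization, Carath\'eodory extension, Schwarz reflection and Picard's theorem; identifying the obstacle without a method leaves that step open as well.
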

The theorem ensures that the metrics constructed in \cite{as}, are the only possible complete scalar-flat K\"ahler toric metrics. The metrics in \cite{as} which are not ALE are referred to as Donaldson generalised Taub-NUT metrics. We also prove that the asymptotic behaviour for a toric scalar-flat K\"ahler metric on a strictly unbounded toric $4$-fold determines it uniquely. 
\begin{thm}\label{unique_prescibed}
Let $(X,\omega)$ be a strictly unbounded toric $4$-manifold and $J$ a complete, compatible complex structure which is torus invariant. Let $g$ be the corresponding toric metric determined by $\omega$ and $J$. If $g$ is scalar-flat and asymptotic to one of the Donaldson generalised Taub-NUT metrics from \cite{as} with parameter $\nu,$ then $g$ is the Donaldson generalised Taub-NUT metric with parameter $\nu$ on $X$.
\end{thm}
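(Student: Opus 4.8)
The plan is to leverage the classification already obtained in Theorem \ref{unique} and then promote it to a statement about parameters by matching asymptotic data. First I would invoke Theorem \ref{unique}: since $(X,\omega,J)$ is a complete scalar-flat K\"ahler toric $4$-manifold, the metric $g$ must be either the ALE metric on $X$ or a Donaldson generalised Taub-NUT metric with some parameter $\nu'$. The whole problem then reduces to (a) excluding the ALE alternative and (b) showing that the asymptotic hypothesis forces $\nu'=\nu$. Both reductions should follow by reading the asymptotic behaviour off the defining data of \cite{as}.

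For step (a), I would distinguish the ALE metric from any Donaldson generalised Taub-NUT metric by an asymptotic invariant that is insensitive to lower-order perturbations, the natural candidate being the volume growth, equivalently the structure of the tangent cone at infinity. The ALE metrics are asymptotically locally Euclidean, with volume growth of order $r^4$, whereas the Donaldson generalised Taub-NUT metrics are of ALF type, with volume growth of order $r^3$; in the action-angle picture of \cite{d1} this dichotomy is visible in whether the pair of axi-symmetric harmonic functions on $\bbR^3$ underlying the symplectic potential carries a nonzero constant term at spatial infinity. Since $g$ is assumed asymptotic to a Taub-NUT metric, it inherits $r^3$ volume growth, and hence cannot be the ALE metric. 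This eliminates alternative (1) and leaves only the Taub-NUT family.

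For step (b), I would track how the parameter $\nu$ enters the asymptotic expansion. Through Donaldson's correspondence the symplectic potential of $g$ determines, and is determined up to gauge by, a pair of axi-symmetric harmonic functions on $\bbR^3$; the metrics of \cite{as} correspond to explicit such functions in which $\nu$ appears as the coefficient governing the leading asymptotics, morally the ALF ``mass'' or the asymptotic length of the circle fibre. Two metrics that are asymptotic in the sense of the statement have symplectic potentials agreeing to leading order, hence associated harmonic functions with the same leading asymptotics, so the value of the parameter is itself an asymptotic invariant. Comparing with the metric of parameter $\nu$ therefore forces $\nu'=\nu$, and combining this with step (a) we conclude that $g$ is precisely the Donaldson generalised Taub-NUT metric with parameter $\nu$ on $X$.

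The main obstacle I anticipate is step (b): one must make precise the passage from ``the two metrics are asymptotic'' to ``their associated harmonic functions have identical leading asymptotics'', and then verify that the map sending $\nu$ to this leading asymptotic datum is injective. This should rest on the explicit expansions of the harmonic functions from \cite{as}, together with a unique-continuation or Liouville-type argument ensuring that an axi-symmetric harmonic function on the relevant half-space which decays faster than the $\nu$-term must vanish, so that no cancellation can hide a discrepancy in $\nu$. By contrast the ALE exclusion in step (a) should be comparatively soft, being only a matter of volume growth, whereas pinning down $\nu$ demands quantitative control of the subleading behaviour.
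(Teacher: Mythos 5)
Your overall skeleton --- invoke Theorem \ref{unique} to get $g=g^P_{\nu'}$ for some parameter, then pin down $\nu'$ by matching asymptotic data against the explicit formulas of \cite{as} --- is exactly the paper's strategy. But both of your steps have genuine gaps. Step (a) is the more serious one: the paper's precise hypothesis (Theorem \ref{prescibed2}) is only that $\left(\pi_*g-g^{P_0}_{\nu_0}\right)_{|P_0}(x)\to 0$ as $x\to\infty$, i.e.\ pointwise decay of $\Hess u-\Hess u_0$ restricted to the polytope; the paper's closing remark stresses that this is strictly weaker than an ALE-type global asymptotic condition. From such a hypothesis you cannot read off volume growth or a tangent cone at infinity: the limit of $\Hess u_0$ for a Taub-NUT model is the rank-one matrix $\nu_0\nu_0^t/\det(v_0,\nu_0)$, so the polytope metric degenerates at infinity, and pointwise smallness of the difference gives no ratio control in the degenerate direction --- hence no volume comparison. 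Moreover the paper never needs a separate ``soft'' ALE exclusion: the ALE metric is just the $\nu'=0$ member of the same family, and the proof handles it inside the same Hessian-limit comparison (including the possibility that the limit coefficient $a$ in $\Hess u\to a\nu_1\nu_1^t$ is infinite, which you would have to confront too).

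Step (b) is where the real work lies, and your sketch omits the mechanisms that make it go through. The paper proves Lemma \ref{Hess}, $\Hess u(x)=\frac{D\xi D\xi^{t}}{V}(\mu^{-1}(x))$ with $V=r\det D\xi$, computes $D\xi$ from the explicit $\xi$ of \cite{as}, and shows that along sequences with $r(x)\to\infty$ one has $\Hess u\to \nu\nu^t/\det(v,\nu)$ for some sequence-dependent vector $v$. Two points you gloss over are essential. First, one must produce sequences along which \emph{both} isothermal coordinates blow up; the paper gets $r(x)\to\infty\Rightarrow r_0(x)\to\infty$ from $r=(\det\Hess u)^{-1/2}$ together with boundedness of $\Hess u$. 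Second, your claimed ``injectivity of $\nu\mapsto$ leading asymptotic datum'' is false as stated: since $v$ is unknown, the rank-one limit $\nu\nu^t/\det(v,\nu)$ determines $\nu$ only up to a scalar, and indeed equality of the limits yields only $\nu'\propto\nu_0$; the sign and scale are then fixed not by any unique-continuation or Liouville argument (none is needed here --- the Liouville step belongs to the proof of Theorem \ref{unique}, already used) but by the cone constraints $\det(\nu_1,\nu),\det(\nu_d,\nu)>0$ satisfied by both parameters. Without these two ingredients your step (b) does not close.
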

We will give a more precise statement of the above theorem ahead. We will in particular explain what we mean by ``asymptotic to a Taub-NUT metric". 

The idea of using Donaldson's action-angle coordinates version of Joyce's construction to tackle uniqueness questions first appeared in \cite{w3}. Wright even suggests a strategy to approach the question and proves some partial results. This is essentially the strategy we pursue here. Another approach to this question is developed by Weber in \cite{w}, using very different techniques.

\begin{rems}
A strictly unbounded toric $4$-manifold is one whose moment polytope is unbounded and whose unbounded edges are not parallel. The notion appears in \cite{as} but we will give a more precise definition ahead.

Part of Theorem \ref{unique_prescibed}, namely the part that corresponds to ALE metrics and $\nu=0$, was proved in \cite{w1} using Twistors. Our asymptotic behaviour requirement is slightly less restrictive. We prove that the metric is the ALE metric assuming only that the metric is asymptotic to the Euclidean metric when restricted to the polytope.
\end{rems}
\vspace{.2cm}
\noindent \textbf{Acknowledgments:} I would to thank Simon Donaldson for his support and Gustavo Granja for interesting conversations.

\section{Unbounded toric $4$-manifolds}

In this section we give a quick review of some facts about unbounded toric manifolds. For more details see \cite{as}.
\begin{defn}
A symplectic $4$-manifold $(X,\omega)$ is said to be toric if it admits an effective Hamiltonian $\bbT^2$-action whose moment map is proper.
\end{defn}
In this setting the moment map image $P$ is the closure of a convex polygonal region in $\bbR^2$ of the form
\[
P=\{x\in \bbR^2: x\ip \nu_i-\lambda_i> 0, \,\, i=1\cdots d\}
\]
where 
\begin{itemize}
\item $d$ is an integer (the number of facets of $P$), 
\item for each $i$, $\nu_i$ is a vector in $\bbR^2$ the primitive interior normal to the $i$th facet,
\item $\lambda_i$ is a real number. 
\end{itemize}
The $\nu_i$'s satisfy the Delzant condition. Namely,
\[
det(\nu_{i},\nu_{i+1})=-1,
\]
for all $i=1,\cdots d-1$. It is now easy to define an unbounded toric manifold.
\begin{defn}
A symplectic toric $4$-manifold $(X,\omega)$ is said to be unbounded if its moment map image is unbounded. It is said to be strictly unbounded if it is unbounded and the unbounded edges in its moment map image are not parallel.
\end{defn}
It is a important result of Guillemin's that every toric symplectic manifold actually admits a K\"ahler structure. 
\begin{thm}[Guillemin]
Let $(X,\omega)$ be a symplectic toric manifold with moment map image $\bar{P}$. Then $(X,\omega)$ admits an integrable, compatible, torus invariant complex structure. This structure is completely determined by $P$. 
\end{thm}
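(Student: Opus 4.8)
The plan is to build the complex structure explicitly on the open dense orbit in action-angle coordinates and then verify that it extends smoothly across the lower-dimensional strata, the decisive global input being the Delzant condition. First I would restrict to $X^\circ := \mu^{-1}(\mathrm{int}\, P)$, the locus where the $\bbT^2$-action is free. By the Arnold--Liouville action-angle theorem this is equivariantly symplectomorphic to $\mathrm{int}\,P \times \bbT^2$ with coordinates $(x,\theta)$ and $\omega = \sum_i dx_i \wedge d\theta_i$. A torus-invariant $\omega$-compatible almost complex structure on this piece is then encoded by a field of positive-definite symmetric matrices $G(x)$: the metric reads $g = \sum_{ij} G_{ij}\, dx_i\, dx_j + \sum_{ij} G^{ij}\, d\theta_i\, d\theta_j$, where $(G^{ij}) = G^{-1}$. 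The integrability of $J$ (vanishing of the Nijenhuis tensor) is equivalent to $G$ being the Hessian $\Hess\, g$ of a strictly convex function $g$ on $\mathrm{int}\, P$, the \emph{symplectic potential}. Thus building the structure over $X^\circ$ reduces to choosing one convex potential.

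Second, for existence I would write down Guillemin's canonical potential
\[
g_P(x) = \frac{1}{2}\sum_{i=1}^d \ell_i(x)\log \ell_i(x), \qquad \ell_i(x) = x\cdot\nu_i - \lambda_i,
\]
and check that its Hessian is positive-definite on $\mathrm{int}\, P$: each summand contributes the rank-one positive semidefinite term $\tfrac{1}{2\ell_i}\,\nu_i\nu_i^{\top}$, and since the $\nu_i$ span $\bbR^2$ the sum is definite. This produces a genuine Kähler structure on $X^\circ$.

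Third --- and this is where I expect the real work to lie --- I would show that the resulting $J$ and $g$ extend smoothly and non-degenerately over every boundary stratum of $P$, so that they descend to a global smooth Kähler structure on $X$. Near a facet $\{\ell_i = 0\}$ the entry of $G$ in the $\nu_i$-direction blows up like $1/\ell_i$, so $G^{-1}$ degenerates there; one must verify that this degeneration is precisely the collapse of the corresponding isotropy circle, i.e.\ that in the smooth manifold coordinates adapted to that stratum the metric closes up without singularity. At a vertex the relevant change of variables is governed by the pair of normals $(\nu_i,\nu_{i+1})$, and the Delzant condition $\Det(\nu_i,\nu_{i+1}) = \pm 1$ is exactly what makes this change of variables a smooth diffeomorphism onto a neighbourhood of a fixed point modeled on $\C^2$. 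Carrying out this local analysis at facets and vertices is the main obstacle.

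Finally, for the assertion that the structure is determined by $P$, the cleanest route is the Delzant construction: realize $(X,\omega)$ as the symplectic (Kähler) quotient of $(\C^d,\omega_{\mathrm{std}})$ by the subtorus $K = \Ker(\bbT^d\to\bbT^2)$ cut out by the normals $\nu_i$ at the level determined by $(\lambda_i)$. Since $K$ acts holomorphically and Hamiltonianly on flat $\C^d$, the standard complex structure descends by Kähler reduction to a compatible integrable $J$ on $X$, and by the uniqueness in the action-angle description it coincides with the one built from $g_P$. Because $K$ and the level depend only on the $\nu_i$ and $\lambda_i$ --- that is, on $P$ --- so does $J$, which yields the final clause of the theorem.
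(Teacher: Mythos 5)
The paper itself offers no proof of this theorem: it is stated as background and attributed to Guillemin \cite{g}, so the comparison is with the classical argument rather than with anything in the text. Your proposal is essentially that classical argument and is correct in outline, but note that your fourth step subsumes the second and third: once you realize $(X,\omega)$ as the K\"ahler quotient of $(\bbC^d,\omega_{\mathrm{std}})$ by $K=\Ker(\bbT^d\to\bbT^2)$, the standard complex structure descends to a globally smooth, integrable, compatible, torus-invariant $J$, and smoothness across facets and vertices --- the part you flag as ``the main obstacle'' --- comes for free; Guillemin's actual contribution was then to compute that the reduced structure has the canonical potential on the open orbit. If instead you insist on the action-angle route of your first three paragraphs, the proposal is incomplete as written: the boundary analysis is only sketched, and the one point you must nail is that the coefficient of $\ell_i\log\ell_i$ is \emph{forced} by the absence of a cone angle along the collapsing isotropy circle --- your $\tfrac12$ matches Abreu's normalization of the moment map, while the paper's displayed $u_P=\sum_i l_i\log l_i - l_i$ uses a different convention, so you should fix the normalization of $\omega$ and the moment map before asserting the formula --- whereas the Delzant condition handles the vertices exactly as you say. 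Two further caveats for the setting at hand: here $P$ is unbounded (the moment map is merely proper), so you should remark that the quotient construction applies verbatim in this case --- indeed Section 4 of the paper performs precisely this quotient construction for $X$ and its real form --- and the equivalence ``$J$ integrable and compatible $\iff$ $G=\Hess u$ for a strictly convex $u$'' invoked in your first step is itself a theorem of Guillemin--Abreu that needs citing or proving. Your handling of the final clause is correct: $K$ and the level depend only on the $\nu_i$ and $\lambda_i$, hence only on $P$.
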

We will call the resulting metric Guillemin's metric and denote the K\"ahler structure by $(X,\omega_P,J_P,g_P)$. We will use integrable, compatible, torus invariant complex structures to parametrize Riemannian metrics on $X$. The advantage of toric manifolds is that there is a particularly nice way to parametrize such complex structure as we will see ahead. We will only be concerned with metrics that come from integrable, compatible, torus invariant complex structures which are equivariantly biholomorphic to $J_P$. 
\begin{defn}
Let $(X,\omega)$ be a symplectic toric manifold with moment map image $P$. We say that an integrable, compatible, torus invariant complex structure is complete if it is equivariantly biholomorphic to $J_P$.
\end{defn}
At this point we would like to give some important examples. We start with a very simple and familiar one.
\begin{exa}
Let $X=\bbC^2=\bbR^4$ with coordinates $(z_1,z_2)$ with symplectic form
\[
\omega=dz_1\wedge d\bar{z}_1+dz_2\wedge d\bar{z}_2.
\]
Let $\bbT^2=S^1\times S^1$ act via the usual $S^1$-action on $\bbC$. Then it is easy to see that $X$ is toric and the moment map of the torus action is
\[
\phi(z_1,z_2)=(|z_1|^2,|z_2|^2),
\]
so that the moment map image is the closure of $P=(\bbR^+)^2$. This has two edges both of which are unbounded.
\end{exa}
Another important example is the following.
\begin{exa}
Let $\Gamma$ be a finite cyclic subgroup of $U(2)$ whose action on $\bbC^2$ has finite isolated singularities. Then the orbifold $\bbC^2/\Gamma$ is toric (although we have not defined toric orbifold it is straightforward to generalize the above definition for manifolds). More importantly this orbifold admits a minimal resolution $X_\Gamma$ which is itself a toric $4$-manifold. There is a complex structure on $X_\Gamma$ coming from the usual complex structure on $\bbC^2$. One can check that this is equivariantly biholomorphic to the Guillemin complex structure.
\end{exa}
Once we have a symplectic toric manifold, we can preform symplectic blow-ups of any finite number of fixed points  to obtain other symplectic toric manifolds. It is well known that such an operation corresponds to ``corner chopping" on the moment map image. 

These examples essentially give all possible strictly unbounded toric $4$-manifolds up to equivariant biholomorphism. In \cite{as}, we prove the following proposition.
\begin{prop}
Let $X$ be a strictly unbounded toric $4$-manifold. Then $X$ is equivariantly biholomorphic to a finite iterated blow-up of some $X_\Gamma$ for some subgroup $\Gamma$ of $U(2)$.
\end{prop}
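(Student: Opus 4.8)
The plan is to translate the statement into the combinatorial language of toric geometry and then invoke the classification of smooth subdivisions of a two-dimensional cone. First I would recall that, by Guillemin's theorem together with the standard dictionary between symplectic toric manifolds and their moment polytopes, the equivariant biholomorphism type of $(X,J_P)$ depends only on the normal fan of $P$, that is, only on the ordered sequence of primitive inner normals $\nu_1,\dots,\nu_d$ and not on the constants $\lambda_i$. Thus it suffices to analyse this fan. As one traverses the boundary of $P$ the normals rotate monotonically, so $\nu_2,\dots,\nu_{d-1}$ all lie in the angular sector bounded by the two extreme normals $\nu_1$ and $\nu_d$, which are exactly the normals of the two unbounded edges. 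Strict unboundedness means these two edges are not parallel, so $\sigma:=\mathrm{Cone}(\nu_1,\nu_d)$ is a strongly convex two-dimensional rational cone, and the fan of $(X,J_P)$ is precisely a subdivision of $\sigma$ by the rays through $\nu_1,\dots,\nu_d$.

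Next I would identify the group $\Gamma$. Using a lattice automorphism of $\bbZ^2$ --- which is realized geometrically by a reparametrization of the acting torus and hence does not change the equivariant biholomorphism type --- I would put $\sigma$ into a standard form, say with $\nu_1=(0,1)$ and $\nu_d=(n,-a)$ where $0\le a<n$ and $\gcd(a,n)=1$. The affine toric variety attached to $\sigma$ is then the cyclic quotient $\bbC^2/\Gamma$ with $\Gamma\subset U(2)$ generated by the diagonal matrix $\mathrm{diag}(\zeta,\zeta^a)$, $\zeta=e^{2\pi i/n}$; the coprimality condition guarantees that the only fixed point is the origin, so this is exactly the situation of the earlier example, and $X_\Gamma$ is the minimal resolution of $\bbC^2/\Gamma$. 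In toric terms $X_\Gamma$ corresponds to the minimal smooth subdivision of $\sigma$, the one produced by the Hirzebruch--Jung continued-fraction algorithm.

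It then remains to compare the fan of $P$ with this minimal subdivision. The Delzant condition $\det(\nu_i,\nu_{i+1})=-1$ says precisely that every two-dimensional cone of the fan of $P$ is smooth, so this fan is a smooth subdivision of $\sigma$. In dimension two the minimal resolution of a normal surface singularity is dominated by every resolution, so the fan of $P$ is a refinement of the Hirzebruch--Jung fan of $X_\Gamma$; concretely, the rays of $P$ contain those of $X_\Gamma$ together with finitely many extra rays. Inserting a single ray into a smooth two-dimensional cone is a star subdivision, which corresponds exactly to a torus-equivariant blow-up of the associated fixed point. Adding the extra rays one at a time therefore exhibits $(X,J_P)$ as a finite iterated equivariant blow-up of $X_\Gamma$, which is the desired conclusion.

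The main obstacle I expect is the combinatorial heart of the third paragraph: verifying carefully that an arbitrary smooth subdivision of $\sigma$ does refine the minimal Hirzebruch--Jung one, and that each inserted ray accounts for exactly one blow-up of a fixed point. This is where the continued-fraction description of two-dimensional smooth cones must be used, and where one must be attentive to conventions (inner versus outer normals, and the monotone ordering of the $\nu_i$) to be sure that all intermediate normals genuinely lie strictly inside $\sigma$ and that no ray of $\sigma$ itself gets subdivided. By contrast, the translation in the first two paragraphs is essentially formal once Guillemin's theorem and the toric dictionary are in hand.
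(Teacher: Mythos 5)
Your proposal is correct and follows essentially the route of the paper, which defers the proof to \cite{as}: there the argument is likewise the translation into normal fans, the identification of the cone spanned by the normals $\nu_1,\nu_d$ of the unbounded edges with a cyclic quotient singularity $\bbC^2/\Gamma$, and the Hirzebruch--Jung/corner-chopping combinatorics showing that any smooth (Delzant) subdivision refines the minimal one, with each extra ray an equivariant blow-up. You correctly isolate the only delicate point (that every smooth subdivision of the cone refines the minimal Hirzebruch--Jung subdivision, one star subdivision at a time), which is handled by the standard continued-fraction lemma.
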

\section{Symplectic potential}
Symplectic potentials are the way in which we parametrize toric K\"ahler metrics on toric manifolds. In this section we briefly explain what these are and why they are useful. For more details see \cite{as} and \cite{a1}. Let $(X,\omega)$ be a symplectic toric $4$-manifold. Denote its moment map by $\phi:X\rightarrow \bbR^2$ and its moment map image $\phi(X)$ by $\bar{P}$ where, 
\[
P=\{x\in \bbR^2: x\ip \nu_i-\lambda_i> 0, \,\, i=1\cdots d\}.
\]
Also, write $(x_1,x_2)$ for the coordinates of the moment map. Identify $\bbT^2$ with $S^1\times S^1$ and let $(\theta_1,\theta_2)$ denote the angle coordinates on each $S^1$ factor. Then $(x,\theta)$ are Darboux coordinates for $\omega=\omega_P$ i.e.
\[
\omega_P=dx_1\wedge d\theta_1+dx_2\wedge d\theta_2.
\]
The important point here is that in these coordinates any $J$ which is a complete, integrable, compatible, torus invariant complex structure is given by
\[
\begin{bmatrix}
\phantom{-}0\ \  & \vdots & -\Hess(u)^{-1}  \\
\hdotsfor{3} \\
\phantom{-}\Hess(u)\ \  & \vdots & 0\,
\end{bmatrix}
\]
for some function $u$ which is called \emph{symplectic potential}. One can also write the Riemannian metric in terms of this potential: 
\[
\begin{bmatrix}
\phantom{-}\Hess(u)\ \  & \vdots & 0  \\
\hdotsfor{3} \\
\phantom{-}0\ \  & \vdots & \Hess(u)^{-1}\,
\end{bmatrix}.
\]
\begin{exa}
Guillemin calculated the symplectic potential $u_P$ for the Guillemin metric:
\[
u_P=\sum_{i=1}^d l_i\log l_i-l_i
\]
where $l_i(x)= x\ip \nu_i-\lambda_i$.
\end{exa}
This example is particularly important because of the following proposition from \cite{a2}:
\begin{prop}[Abreu]\label{boundary_u}
Let $X$ be a symplectic toric $4$-manifold with moment map image $\bar{P}$. Let $J$ be any complete, integrable, compatible, torus invariant complex structure with symplectic potential $u$. Then $u-u_P$ is a smooth function on a neighbourhood of $\bar{P}$.
\end{prop}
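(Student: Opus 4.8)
The plan is to reduce the statement to a local computation near $\partial P$ and then read off the singular behaviour of $u$ that is forced by smoothness of the metric. Since smoothness is a local property, I would cover a neighbourhood of $\bar{P}$ by charts adapted to the two types of boundary strata: the relative interiors of the facets $\{l_i=0\}$, where a single circle in the torus degenerates, and the vertices, where the full torus orbit collapses to a fixed point. At each such point the Delzant condition $\Det(\nu_i,\nu_{i+1})=-1$ lets me apply an element of $SL(2,\bbZ)\ltimes\bbR^2$ to the action-angle coordinates $(x,\theta)$, normalising the local picture: near a facet interior the polytope becomes the half-space $\{x_1\geq 0\}$ with normal $\nu=(1,0)$, and near a vertex it becomes the positive quadrant $\{x_1\geq 0,\ x_2\geq 0\}$. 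Because such a transformation changes $u$ and $u_P$ by the same affine function, it suffices to prove smoothness of $u-u_P$ in these normalised models.

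First I would record the smooth model. In the half-space model the manifold is locally $\bbC\times\bbC^\ast$, with the degenerating circle $\theta_1$ realised as the argument of a transverse holomorphic coordinate $z_1$; up to a normalising constant $|z_1|^2=x_1$ and $\arg z_1=\theta_1$, so that the $\bbC$-factor carries its standard rotationally symmetric smooth structure. In the quadrant model the manifold is locally $\bbC^2$ with coordinates $(z_1,z_2)$ of the same type. The content of the proposition is that, read in these coordinates, the metric $g=\mathrm{diag}(\Hess(u),\Hess(u)^{-1})$ extends smoothly across the boundary precisely when $u$ carries the logarithmic singularity of $u_P$. To extract that singularity I demand near $\{x_1=0\}$ that the pulled-back metric be smooth in $(z_1,\bar z_1)$ and in the $\bbC^\ast$-coordinate. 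Comparing the $d\theta_1^2$ and $dx_1^2$ blocks of $g$ with the standard metric on the $\bbC$-factor — for which the angular coefficient vanishes like $|z_1|^2$ while the radial coefficient, proportional to $x_1\,\partial_{x_1}^2 u$, stays bounded and nonzero — forces $\partial_{x_1}^2 u\sim 1/x_1$ as $x_1\to 0^+$, which is exactly $\partial_{x_1}^2(l_1\log l_1)$ with $l_1=x_1$. More precisely the smoothness conditions force $u-(l_1\log l_1-l_1)$ to extend smoothly across the facet; since the remaining terms $\sum_{i\neq 1}(l_i\log l_i-l_i)$ of $u_P$ are already smooth there, they cancel the smooth part of $u$ and leave $u-u_P$ smooth near the facet. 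The vertex case runs identically in the two variables at once, the only singular contribution to $u_P$ at the corner being $l_1\log l_1+l_2\log l_2$. Passing between the block $\Hess(u)$ and the block $\Hess(u)^{-1}$ is the Legendre duality between the symplectic potential $u$ and the Kähler potential on the complex side, and it is this inversion that turns a leading-order match into honest smoothness.

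The main obstacle is precisely this upgrade from leading order to full smoothness. That $u$ and $u_P$ share the singular term $l_1\log l_1$ is immediate; the real assertion is that every partial derivative of $u-u_P$ extends continuously up to and across $\partial P$. Securing this means expanding the smoothness requirements on $g$ — or equivalently on the complex structure $J$, given by the displayed off-diagonal matrix — order by order in $z_1$ near a facet and in $(z_1,z_2)$ near a vertex, and verifying that they are equivalent to $C^\infty$-regularity of the remainder rather than to its mere boundedness. The subtlety is that inverting $\Hess(u)$, which degenerates at the boundary, could a priori reintroduce boundary singularities in the $d\theta^2$ block, and one must show these are absent. Once local smoothness of $u-u_P$ is in hand in every adapted chart, a partition of unity subordinate to the covering of $\bar{P}$ assembles the local statements into smoothness on a neighbourhood of $\bar{P}$, completing the proof.
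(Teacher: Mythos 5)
Your proposal cannot be matched against a proof in the paper, because the paper does not prove this proposition: it is quoted from Abreu \cite{a2}, where the argument is global rather than local. Abreu uses precisely the hypothesis you never invoke — that $J$ is \emph{complete}, i.e.\ equivariantly biholomorphic to $J_P$ — to pull everything back to the fixed complex manifold $(X,J_P)$, where the two K\"ahler structures have potentials on the open $(\bbC^*)^2$-orbit differing by a function that is smooth on all of $X$, and then transfers this smoothness across the Legendre transform between K\"ahler and symplectic potentials, using Guillemin's explicit computation of $u_P$ \cite{g}. Your route is instead a local boundary-regularity analysis of the metric $g=\mathrm{diag}(\Hess(u),\Hess(u)^{-1})$ in the normalised half-space and quadrant models; that route can in principle work (the normalisation by $SL(2,\bbZ)\ltimes\bbR^2$ and the identification $x_1=|z_1|^2$ are fine, and the conclusion would even be a local statement not needing completeness), but it is the harder direction of the known equivalence between Guillemin-type boundary conditions and smooth compactification of the metric, and it was worked out in the literature only with considerable care in the first-order boundary conditions on $H=\Hess(u)^{-1}$ along the facets.

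The genuine gap is that your argument, as written, only establishes the leading-order singularity: smoothness and nondegeneracy of the metric at $z_1=0$ give $x_1\,\partial^2_{x_1}u\to c>0$, i.e.\ that $u$ and $u_P$ share the term $l_1\log l_1$ up to lower-order errors. That is very far from $u-u_P\in C^\infty$ near $\bar{P}$. For instance $u=u_P+l_1^{3/2}$ has exactly the same leading behaviour, with $\Hess(u)$ positive near the facet, yet $u-u_P$ is only $C^1$; to exclude such remainders you must show that every non-smooth correction destroys smoothness of $g$ in the coordinates $(\Real z_1,\Imag z_1,x_2,\theta_2)$ — including the inverse block $\Hess(u)^{-1}$, whose vanishing along the facet must occur with a specific normalised first normal derivative in order to pin down the coefficient of the logarithm and the $2\pi$-periodicity of the collapsing circle. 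You explicitly flag this as ``the main obstacle'' and then merely restate it as a task (``expanding \dots order by order \dots and verifying''), without carrying out a single step of that expansion or explaining why inverting the degenerate Hessian does not reintroduce boundary singularities. Since this upgrade from leading order to full $C^\infty$ regularity \emph{is} the content of the proposition, the proposal is a plausible plan with its central step missing; the cited proof of Abreu avoids the difficulty entirely by the global biholomorphism argument, which is why the completeness hypothesis appears in the statement. (The closing appeal to a partition of unity is also superfluous — smoothness is local — but that is cosmetic.)
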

In \cite{a1} Abreu wrote down a formula for scalar curvature in terms of symplectic potential.
\begin{thm}[Abreu]
The scalar curvature of a torus invariant metric determined by the symplectic potential $u$ is given by
\[
s = -\frac{1}{2} \sum_{j,k} \frac{\partial^2 u^{jk}}{\partial x_i\partial x_j},
\]
where $u^{ij}$ denote the entries of the matrix $\Hess(u)^{-1}$.
\end{thm}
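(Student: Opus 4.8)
The plan is to reduce the statement to a computation on the open dense torus orbit $X_0 \subset X$, where both the symplectic coordinates $(x,\theta)$ and holomorphic logarithmic coordinates are available, and then to exploit the K\"ahler condition. On $X_0$ introduce complex coordinates $w_j = \xi_j + i\theta_j$ (so that $z_j = e^{w_j}$), and let $F(\xi)$ be the K\"ahler potential, a strictly convex function depending only on $\xi$. The symplectic potential $u$ and $F$ are Legendre dual: the moment map is $x_j = \partial F/\partial\xi_j$, and the Hessians obey $\Hess_\xi(F) = \Hess_x(u)^{-1}$. In particular $F_{ij} := \partial^2 F/\partial\xi_i\partial\xi_j = u^{ij}$, the entries of $\Hess(u)^{-1}$, while conversely $\partial^2 u/\partial x_i\partial x_j = u_{ij}$ are the entries of $\Hess_\xi(F)^{-1}$.

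First I would write down the metric on $X_0$ as a K\"ahler metric $g_{i\bar j} = \partial^2 F/\partial w_i\partial\bar w_j$, which, because $F = F(\xi)$, is a constant multiple of $F_{ij} = u^{ij}$, and invoke the standard formula for the scalar curvature of a K\"ahler metric, $s = -g^{i\bar j}\,\partial_i\partial_{\bar j}\log\det(g_{k\bar l})$, up to the normalising constant fixed by $\omega$. Since $\det(g_{k\bar l})$ is a constant multiple of $\det\Hess_\xi(F) = 1/\det\Hess_x(u)$, we get $\log\det(g_{k\bar l}) = -\log\det\Hess(u) + \text{const}$, so the whole problem becomes the computation of a second-order differential operator applied to $\log\det\Hess(u)$.

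The key step is to convert the holomorphic derivatives into derivatives in the $x$ variables. Using $x_k = \partial F/\partial\xi_k$ one has $\partial/\partial\xi_i = \sum_k u^{ik}\,\partial/\partial x_k$, while the inverse metric contributes $g^{i\bar j}\propto u_{ij}$. Substituting these and applying the standard matrix-calculus identities $\partial_{x_k}\log\det\Hess(u) = \sum_{i,j} u^{ij} u_{ijk}$ and $\partial_{x_k} u^{ij} = -\sum_{p,q} u^{ip}u^{jq}u_{pqk}$ (where subscripts denote $x$-derivatives of $u$), the expression can be unwound. The main obstacle is purely organisational: one must show that the resulting forest of third-derivative terms $u_{ijk}$, weighted by products of $u^{ij}$ and $u_{ij}$, collapses exactly into the divergence-of-divergence form $\sum_{i,j}\partial^2 u^{ij}/\partial x_i\partial x_j$, with the constant $-\tfrac12$ emerging from the normalisation of $\omega$. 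I expect this bookkeeping --- keeping track of contracted indices and repeatedly using $\sum_k u^{ik}u_{kj} = \delta^i_j$ --- to be the delicate part, since naive expansion produces many terms that must be seen to cancel.

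Finally, although the computation lives on $X_0$, the formula for $s$ is manifestly a smooth function of $x$ on the open polytope $P$, and by Proposition \ref{boundary_u} the potential $u$ differs from $u_P$ by a function smooth up to $\bar P$; hence $s$ extends smoothly across the lower-dimensional orbits and the identity holds on all of $X$ by continuity. As an alternative that avoids complex geometry altogether, one could instead work directly in the real Darboux coordinates $(x,\theta)$: the metric is block-diagonal, $g = \Hess(u)\oplus\Hess(u)^{-1}$, with all coefficients independent of $\theta$, so the Christoffel symbols and curvature tensor can be computed explicitly and contracted. This route is more laborious but conceptually transparent and yields the same $x$-derivative identities.
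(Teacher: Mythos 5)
Your proposal is correct, but note that the paper itself offers no proof of this statement: it is quoted verbatim from Abreu \cite{a1}, so there is no internal argument to compare against. What you have written is, in essence, a reconstruction of Abreu's original proof: pass to the dense torus orbit, use the Legendre duality $x=\nabla_\xi F$, $\Hess_\xi(F)=\Hess_x(u)^{-1}$, express the K\"ahler scalar curvature as $-2g^{j\bar k}\partial_j\partial_{\bar k}\log\det(g_{l\bar m})$ with $\log\det(g_{l\bar m})=-\log\det\Hess(u)+\mathrm{const}$, convert $\xi$-derivatives to $x$-derivatives, and collapse the third-derivative terms using $\partial_{x_k}\log\det\Hess(u)=\sum_{i,j}u^{ij}u_{ijk}$ and $\partial_{x_k}u^{ij}=-\sum_{p,q}u^{ip}u^{jq}u_{pqk}$; this bookkeeping does close up exactly as you anticipate, and your continuity remark via Proposition \ref{boundary_u} correctly handles the lower-dimensional orbits (though strictly the identity on the dense orbit already suffices, as both sides are determined there). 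Two small points of hygiene: the overall constant ($-\tfrac12$ here versus $-1$ in Abreu's normalisation) depends on the convention for $\omega$, which you rightly flag but should pin down before claiming the factor; and the statement as printed in the paper contains an index typo --- the sum should read $\partial^2u^{jk}/\partial x_j\partial x_k$ --- which your contraction pattern silently (and correctly) assumes.
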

Hence the relevant PDE for symplectic potentials of scalar-flat K\"ahler toric metrics is 
\begin{equation}\label{nlPDE}
 \frac{\partial^2 u^{jk}}{\partial x_i\partial x_j}=0
 \end{equation}
where we have used Einstein's convention for sums of repeated indices. This is a non-linear second order PDE. 
\section{The real form of a toric K\"ahler manifold}\label{real_form}
In this paper we will make use the real sub-manifold associated with $(X,\omega)$ and we start by quickly reviewing its construction. This is tightly linked to the construction of the toric manifold itself as described in \cite{g}. In \cite{dls} the real manifold is further described and referred to as real form of the toric K\"ahler manifold. 

We will assume that $P$ is normalised so that $\nu_1=(0,1)$, $\nu_2=(1,0)$ and the corresponding edges meet at zero. That is, $P$ is normalised so as to be standard at the vertex $0$. We can always assume this as $P$ is only defined up to $SL(2,\bbZ)$. We write the other normals of $P$ as $\nu_i =(\beta_i,\alpha_i), \, i=1,\cdots, d$ and notice that
$$
\alpha_i\nu_1+\beta_i\nu_2-\nu_i=0, \, i=3,\cdots d.
$$
The normals of the moment polytope $P$ determine a short exact sequence,
$$
\{0\}\rightarrow N\rightarrow \bbC^d\rightarrow \bbC^2\rightarrow \{0\}
$$
where the map $\bbC^d\rightarrow \bbC^2$ is the linear mapping sending $e_i$ to $\nu_i$ for $i=1,\cdots d.$ Here $\{e_1,\cdots e_d\}$ is the standard basis for $\bbC^d$. The subspace $N$ is given by
$$
N=\spa \{\alpha_i e_1+\beta_i e_2-e_i, i=3,\cdots d\},
$$
or $N=\{0\}$ if $d=2$. Then we may define 
$$
X=\{(z_1,\cdots, z_d):\alpha_i|z_1|^2+\beta_i|z_2|^2-|z_i|^2=\lambda_i, i=3,\cdots, d \}/N,
$$ 
which admits a $\bbT^d/N$-action. We can, by making a choice, identify $\bbT^d/N$ with $\bbT^2$. Let us say that we choose this identification so that
$$
(e^{i\theta_1},e^{i\theta_2})[z_1,z_2, \cdots,z_d]=[e^{i\theta_1}z_1, e^{i\theta_2}z_2, z_3,\cdots z_d].
$$
Given this choice the moment map $\phi: X\rightarrow \bbR^2$ is given by $\phi([z_1,z_2, \cdots,z_d])=(|z_1|^2,|z_2|^2)$. Complex conjugation on $\bbC^2$ descends to a function on $X$ and its fixed point set is the real sub-manifold of $X$ denoted by $X_\bbR$. It is given by
$$
X_\bbR=\{(\bx_1,\cdots, \bx_d):\alpha_i\bx_1^2+\beta_i\bx_2^2-\bx_i^2=\lambda_i, i=3,\cdots, d \}/N_\bbR
$$
where $N_\bbR=N\cap \bbR^d.$ The restriction of $\phi$ to $X_\bbR$ is denoted by $\phi_\bbR$. The map $\phi_\bbR:X_\bbR\rightarrow \bbR^2$ is a $4$ to $1$ branched cover branched along $\phi_\bbR^{-1}(\partial P)$ and $\breve{X}_{\bbR}=\phi_\bbR^{-1}(P)$ can be written as the disjoint union of four open sets $P_j,\, j=0,\cdots 3$. For each such open subset there is a map $\sigma_j:P_j\rightarrow P_0$ given by the action of one of the four elements of $\bbT^2$ preserving $X_\bbR$ namely $(\pm 1,\pm1)$.  Any toric K\"ahler metric $g$ on $X$ induces a metric $g_\bbR$ on $X_\bbR$ and on the open set $P_0$. This metric can be identified with $u_{ij}dx_i\otimes dx_j$ more precisely
$$
g_r={\phi_\bbR}_*( g_\bbR)=u_{ij}dx_i\otimes dx_j.
$$
Because $\bbT^2$ acts by isometries on $X$, each $\sigma_j$ is an isometry for $g_\bbR$. 

We finish this section by noting that unless $P$ is a rectangle or $(\bbR^+)^2$, $X_\bbR$ is not orientable. It admits an orientable double cover $X^{o}_\bbR\rightarrow X_\bbR$ which can be thought of as a gluing of eight copies of $P$, $\tilde{P}_j^k$ $j=0,\cdots 3$ and $k=0,1$ where each $\tilde{P}_j^k$ is the pre-image under the double cover of $P_j$ discussed above. The metric $g_\bbR$ induces a metric on $X^{o}_\bbR$ and the $\sigma_i$ can be lifted as isometries. We will also need the lift of $\phi_\bbR$ to $X^{o}_\bbR$ which we denote by $\phi^o_\bbR:X^{o}_\bbR\rightarrow \bbR^2.$ Because $X^{o}_\bbR$ is orientable, a metric on it determines a complex structure via the Hodge star which we denote $J_\bbR$. The manifold $X_\bbR$ itself does not admit such a complex structure this is why we need to use $X^{o}_\bbR$ rather than $X_\bbR$. This structure also induces a complex structure on $P$, $J_r$.

\section{Joyce's construction in action-angle coordinates}\label{joyce_construction}
In \cite{d1} Donaldson translates Joyce's construction of local scalar-flat K\"ahler metrics with torus symmetries into the language of symplectic potentials and action-angle coordinates. In this section our aim is to describe Donaldson's construction. For more details and proofs see \cite{d1}. Consider the following linear PDE
\begin{equation}\label{PDE}
 \frac{\partial^2 \xi}{\partial H^2}+\frac{\partial^2\xi}{\partial r^2}+\frac{1}{r}\frac{\partial \xi}{\partial r }=0,
\end{equation}
on $\mathbb{H}=\{(H,r)\in\R^2:r > 0\}$. 
\begin{thm}[Donaldson,\cite{d1}]\label{donaldson's}
Let $\xi_1$ and $\xi_2$ be two solutions of equation (\ref{PDE}) on an open subset $B$ of  $\mathbb{H}$. Let
\begin{displaymath}
 \epsilon_1=r\left(\frac{\partial \xi_2}{\partial r} dH-\frac{\partial \xi_2}{\partial H} dr \right)
\end{displaymath}
and
\begin{displaymath}
 \epsilon_2=-r\left(\frac{\partial \xi_1}{\partial r} dH-\frac{\partial \xi_1}{\partial H} dr \right).
\end{displaymath}
Then these two $1$-forms are closed. Let $x_1$ and $x_2$ denote their primitives, well defined up to a constant. Then $(x_1,x_2)$ are local coordinates in $\mathbb{R}^2$. Let
\begin{displaymath}
 \epsilon=\xi_1 dx_1+\xi_2 dx_2.
\end{displaymath}
This $1$-form is also closed. Let $u$ be a primitive of $\epsilon$ and write $\xi=(\xi_1,\xi_2)$. Then, if $\det D\xi>0$, where
\[
D\xi=\begin{pmatrix}
\frac{\partial \xi_{1}}{\partial H} & \frac{\partial \xi_{1}}{\partial r}  \\
\frac{\partial \xi_{2}}{\partial H} & \frac{\partial \xi_{2}}{\partial r}  
\end{pmatrix},
\]    
the function $u$ is a local symplectic potential for some toric K\"ahler metric on $\mathbb{R}^4$ whose scalar curvature is $0$ i.e.
\begin{enumerate}\label{conditions_u}
\item $\Hess(u)$ is positive definite on the interior of the polytope and when restricted to $i$-th facet of the moment polytope $l_i^{ -1}(0)$ the non-singular part of $u$ i.e. $u-l_i\log (l_i)$ is convex on the interior of $l_i^{ -1}(0)$.
\item $u$ solves equation (\ref{nlPDE}).
\end{enumerate}
\end{thm}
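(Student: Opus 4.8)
The plan is to establish the assertions in the order stated, saving the scalar-flat (Abreu) equation for last since it carries all the content. First I would verify that $\epsilon_1,\epsilon_2$ are closed by direct differentiation. Writing subscripts for partial derivatives, $\epsilon_1=r\xi_{2,r}\,dH-r\xi_{2,H}\,dr$, so
\[
d\epsilon_1=\big(\partial_r(r\xi_{2,r})+\partial_H(r\xi_{2,H})\big)\,dr\wedge dH=r\Big(\xi_{2,rr}+\tfrac1r\xi_{2,r}+\xi_{2,HH}\Big)\,dr\wedge dH,
\]
and the parenthesis vanishes because $\xi_2$ solves (\ref{PDE}); the identical computation for $\xi_1$ gives $d\epsilon_2=0$. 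Since $dx_i=\epsilon_i$, the Jacobian of $(x_1,x_2)$ in the variables $(H,r)$ has determinant $r^2\det D\xi$, which is positive by hypothesis, so $(x_1,x_2)$ are genuine coordinates by the inverse function theorem. Expanding $d\epsilon=d\xi_1\wedge dx_1+d\xi_2\wedge dx_2$, the two terms equal $\mp r(\xi_{1,H}\xi_{2,H}+\xi_{1,r}\xi_{2,r})\,dH\wedge dr$ and cancel, so $\epsilon$ is closed and its primitive $u$ exists.

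Next I would identify the metric. From $du=\epsilon=\xi_1\,dx_1+\xi_2\,dx_2$ one reads off $\xi=\nabla_x u$, whence $\Hess(u)=D\xi\,(Dx)^{-1}$ and dually $\Hess(u)^{-1}=Dx\,(D\xi)^{-1}$. Carrying out this product with the explicit $Dx$ yields the symmetric matrix
\[
\Hess(u)^{-1}=\frac{r}{\det D\xi}\begin{pmatrix}\xi_{2,H}^2+\xi_{2,r}^2 & -(\xi_{1,H}\xi_{2,H}+\xi_{1,r}\xi_{2,r})\\ -(\xi_{1,H}\xi_{2,H}+\xi_{1,r}\xi_{2,r}) & \xi_{1,H}^2+\xi_{1,r}^2\end{pmatrix}.
\]
Its symmetry is exactly the consistency check that $u$ really is a potential, and because $r>0$ the matrix is positive definite precisely when $\det D\xi>0$; this is the interior half of condition (1).

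The main obstacle is condition (2), Abreu's equation $\sum_{j,k}\frac{\partial^2 u^{jk}}{\partial x_j\partial x_k}=0$. Here I would feed the explicit entries $u^{jk}$ above into the equation and convert every $x$-derivative into an $(H,r)$-derivative by the chain rule, using the inverse of the Jacobian $Dx$ found in the first step. The point---which is exactly Donaldson's observation---is that after this substitution the apparently nonlinear fourth-order combination collapses: every term that would obstruct vanishing is a multiple of $\xi_{1,rr}+\tfrac1r\xi_{1,r}+\xi_{1,HH}$ or of the same expression in $\xi_2$, both of which are zero by (\ref{PDE}). I expect this to be the delicate step, since the prefactor $r/\det D\xi$ together with the quadratic entries must be differentiated twice and recombined; the harmonicity of $\xi_1,\xi_2$ has to be invoked at just the right places to annihilate the surviving second-order terms, and keeping this bookkeeping organized, rather than any conceptual difficulty, is what makes it laborious.

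Finally, for the facet-convexity half of condition (1) I would observe that on an open $B\subset\{r>0\}$ the coordinate image is an open region with no facets, so this requirement is relevant only when the construction is pushed to the boundary $r=0$, which is the locus corresponding to the facets of a moment polytope. There one analyses the degeneration as $r\to0$, matching the logarithmic singularity $l_i\log l_i$ of $u$ and checking that the regular remainder is convex along the facet. This is a local asymptotic analysis, comparatively routine once the dictionary between $(H,r)$ near $r=0$ and the polytope boundary has been set up.
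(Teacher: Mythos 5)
Your computations, as far as you carry them out, are all correct: the closedness of $\epsilon_1$, $\epsilon_2$ and $\epsilon$, the Jacobian identity $\det Dx = r^2\det D\xi$, and your formula for $\Hess(u)^{-1}$ (whose determinant is $r^2$, so that $\det\Hess u = r^{-2}$) all check. Bear in mind, though, that the paper does not prove this theorem at all --- it is quoted from Donaldson with a pointer to \cite{d1} --- so the only comparison available is with Donaldson's argument, and there your treatment of the one step carrying real content, Abreu's equation, takes a genuinely heavier route: you propose a brute-force chain-rule expansion of $\sum_{j,k}\partial_j\partial_k u^{jk}$, which you leave unexecuted on the grounds that the bookkeeping is laborious. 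It would indeed collapse as you predict, but there is a short conceptual argument which avoids all of it and which is the one the paper itself implicitly relies on in Section 5 (``$r$ and $H$ are harmonic and harmonic conjugates for $g_r$''). Since the cofactor matrix $u^{jk}\det\Hess u$ of a Hessian is divergence-free in each row, and $\det\Hess u=r^{-2}$ by your own computation, one gets $\sum_k\partial_k u^{jk}=\sum_k u^{jk}r^{-2}\partial_k(r^2)=2u^{jk}\partial_k r/r$, whence
\[
\sum_{j,k}\frac{\partial^2 u^{jk}}{\partial x_j\partial x_k}
=2\sum_{j}\partial_j\Bigl(\frac{u^{jk}\,\partial_k r}{r}\Bigr)
=\frac{2}{r}\,\Delta_{g_r} r,
\]
because $\sqrt{\det g_r}=1/r$ for $g_r=u_{ij}\,dx_i\otimes dx_j$. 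On the other hand, your explicit $\Hess(u)^{-1}$ is equivalent to the statement that the pullback of $g_r$ to the $(H,r)$ plane is $r\det D\xi\,\bigl(dH\otimes dH+dr\otimes dr\bigr)$, i.e.\ that $(H,r)$ are isothermal coordinates for $g_r$; by conformal invariance of harmonicity in two dimensions, the coordinate function $r$ is then automatically $g_r$-harmonic, and $s=0$ follows with no fourth-order bookkeeping. So your plan is sound but, at the crucial step, trades a two-line identity for a long computation you have not actually performed. Finally, your reading of the facet condition is right: on an open $B\subset\bbH$ the statement is purely local, and the convexity of $u-l_i\log l_i$ along facets only becomes meaningful in the global construction where the locus $r=0$ is matched with $\partial P$; the paper likewise invokes it only in that setting.
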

The above construction is local but one can use it with the appropriate boundary conditions on the solutions of equation (\ref{PDE}) to construct global metrics.
In \cite{d1} Donaldson also explains that the above construction is reversible. That is, starting from an $u$ satisfying conditions (\ref{conditions_u}),
we can find $(H,r)$ in terms of $(x_1,x_2)$ as well as  a pair of solutions $(\xi_1,\xi_2)(H,r)$ of equation (\ref{PDE}) such that  $\det D\xi>0$. More precisely
let $r=(\det \Hess u)^{-1/2}$ and $H$ so that 
$$
\begin{aligned}
\frac{\partial H}{\partial x_1}&=&-\frac{u^{2j}}{r}\frac{\partial r}{\partial x_j}\\
\frac{\partial H}{\partial x_2}&=&\frac{u^{1j}}{r}\frac{\partial r}{\partial x_j}.\\
\end{aligned}
$$
Donaldson shows that $r$ and $H$ as functions on the polytope $P$ endowed with the metric $g_r=u_{ij}dx_i\otimes dx_j$ are harmonic and harmonic conjugates. The way we will want to interpret this is the following. The metric $g_r$ on $P$ induces a complex structure $J_r$ via its Hodge star. By definition this complex structure is such that $a \circ J_r=\star a$ for any $1$-form $a$ on $P$. We have $\star dr=-dH$. Therefore $$d(H+ir)\circ J_r=\star d(H+ir)=id(H+ir).$$ 

Set $\bz=H+ir$. This is a $J_r$-holomorphic local coordinate on $P$.



\section{The map $\bz=H+ir$}
The goal of this section of this section is to prove the following proposition.
\begin{prop}
Let $(X,\omega)$ be a strictly unbounded toric $4$-manifold and $J$ a complete, compatible complex structure which is torus invariant. Let $g$ be the corresponding toric metric determined by $\omega$ and $J$. Let $r=(\det \Hess u)^{-1/2}$ and $H$ so that 
$$
\begin{aligned}
\frac{\partial H}{\partial x_1}&=&-\frac{u^{2j}}{r}\frac{\partial r}{\partial x_j}\\
\frac{\partial H}{\partial x_2}&=&\frac{u^{1j}}{r}\frac{\partial r}{\partial x_j}.\\
\end{aligned}
$$
The map $\bz=H+ir: P\rightarrow \bbH$ is a bijection.
\end{prop}
We start by noticing that because the boundary behaviour of $u$ is determined on $\partial P,$ the behaviour of $\bz$ is also determined on $\partial P$ (for more details on this see \cite{s}). 
We can see that $\bz$ extends to $\partial P$ as a continuous function which we denote by $\tilde{\bz}$ and that because $r$ vanishes on the boundary, $\tilde{\bz}(\partial P)\subset \partial\bbH.$ It is not hard to see that $\tilde{\bz}_{|\partial P}:\partial P\rightarrow \partial \bbH$ is a bijection as it coincides with the extension of the map $\bz$ for the ALE metric which we can calculate explicitly.
\subsection{Injectivity of $\bz$}
We start by showing injectivity. 
\begin{lemma}
Let $w\in \partial P$ not a vertex of $P$ and $p$ one of the $4$ elements in $(\phi^o_\bbR)^{-1}(w)\subset X^{o}_\bbR$. There is a neighbourhood $\mV_p$ of $p$ in $X^o_\bbR$ such that $\bz\circ \phi^o_\bbR$ extends to $\mV_p$ as a holomorphic function for $J_\bbR.$
\end{lemma}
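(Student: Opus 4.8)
The plan is to prove the lemma by a Schwarz-type reflection of $\bz\circ\phi^o_\bbR$ across the branch locus of $\phi^o_\bbR$ at $p$, exploiting the already-noted fact that $\tilde\bz$ sends $\partial P$ into $\partial\bbH=\bbR$. First I would set up the local model at $p$. Since $w$ lies in the interior of a facet of $P$, the real-form description of Section \ref{real_form} shows that near $p$ the branched cover $\phi^o_\bbR$ is a simple fold: there is a local isometric involution $\tau$ of $(X^o_\bbR,g_\bbR)$ with $\phi^o_\bbR\circ\tau=\phi^o_\bbR$ whose fixed-point set $L:=(\phi^o_\bbR)^{-1}(\partial P)$ is a smooth curve through $p$. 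Shrinking $\mV_p$, the complement $\mV_p\setminus L$ has two components $A$ and $B=\tau(A)$, each carried diffeomorphically by $\phi^o_\bbR$ onto the interior side of a neighbourhood of $w$ in $\bar P$. Because $g_r={\phi^o_\bbR}_*(g_\bbR)$, each restriction $\phi^o_\bbR|_A,\phi^o_\bbR|_B$ is an isometry onto $(P,g_r)$, orientation-preserving on exactly one of the two sheets; relabelling if necessary, take $\phi^o_\bbR|_A$ orientation-preserving. Then $\phi^o_\bbR|_A$ intertwines $J_\bbR$ and $J_r$, so, since $\bz\colon(P,J_r)\to\bbH$ is holomorphic by construction, $F:=\bz\circ\phi^o_\bbR$ is $J_\bbR$-holomorphic on $A$.

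Next I would record the two facts driving the reflection. Since $\bz$ extends continuously to $\tilde\bz$ on $\bar P$ and $\phi^o_\bbR$ is continuous, $\bz\circ\phi^o_\bbR$ extends continuously across $L$, and because $\tilde\bz(\partial P)\subset\partial\bbH=\bbR$ this extension takes real values on $L$. The involution $\tau$ is an orientation-reversing isometry, hence anti-holomorphic for $J_\bbR$, and fixes $L$ pointwise. I would then define $\tilde F$ on $\mV_p$ by $\tilde F=\bz\circ\phi^o_\bbR$ on $\bar A$ and $\tilde F=\ov{\bz\circ\phi^o_\bbR\circ\tau}$ on $\bar B$. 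On $B$ the map $\tau$ is anti-holomorphic into $A$, where $\bz\circ\phi^o_\bbR$ is holomorphic, so $\bz\circ\phi^o_\bbR\circ\tau$ is anti-holomorphic and its conjugate is holomorphic; on $L$ the two definitions agree, since there $\tau=\id$ and $F$ is real. Thus $\tilde F$ is continuous on $\mV_p$, holomorphic on $\mV_p\setminus L$, and agrees with $\bz\circ\phi^o_\bbR$ on the sheet $A$, so it is the sought extension provided it is holomorphic across $L$.

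Finally I would invoke removability of the smooth arc $L$. In a $J_\bbR$-holomorphic coordinate on $\mV_p$, $\op\tilde F$ vanishes off the measure-zero set $L$; since $\tilde F$ is continuous and $L$ is a smooth (in particular Lipschitz) curve, the boundary contributions from the two sides of $L$ cancel, so $\op\tilde F=0$ in the distributional sense. By Weyl's lemma $\tilde F$ is then $J_\bbR$-holomorphic on all of $\mV_p$, which furnishes the desired holomorphic extension of $\bz\circ\phi^o_\bbR$ from the sheet $A$.

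I expect the main obstacle to be the orientation bookkeeping of the first two paragraphs: one must verify that at a non-vertex boundary point $\phi^o_\bbR$ is a genuine fold, so that the anti-holomorphic isometric involution $\tau$ fixing $L$ exists, and one must track which of the two sheets meeting at $p$ makes $\bz\circ\phi^o_\bbR$ holomorphic rather than anti-holomorphic (it is holomorphic on exactly one of them, and the reflected extension equals $\ov{\bz\circ\phi^o_\bbR}$ on the other). Once this is pinned down, the reflection construction and the removability step are routine.
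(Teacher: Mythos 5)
Your argument is correct, but it follows a genuinely different route from the paper's. The paper works at the level of the harmonic function $r$ rather than of $\bz$ itself: using the Guillemin boundary form $u=x\log x+u_0$ near the facet, it computes $r=\sqrt{x}\left(u_{0,22}+x\det\Hess(u_0)\right)^{-1/2}$, observes that replacing $\sqrt{x}$ by the signed sheet coordinate $\bx_1$ gives a \emph{smooth} odd extension $\tilde{r}$ of $r\circ\phi_\bbR$ across the branch curve, checks harmonicity sheet by sheet (using that the deck map $\sigma_1$ comes from the torus action, hence is an isometry), lifts to $X^o_\bbR$, and finally defines the extension as $H^o+ir^o$ with $H^o$ the harmonic conjugate normalized to agree with $H$ on $P_0$. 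You instead perform a classical Schwarz reflection on $\bz$ directly: holomorphy on the orientation-preserving sheet, the anti-holomorphic fold involution $\tau$, reality of $\tilde{\bz}$ on $\partial P$, and removability of the continuous arc $L$ via Morera/Weyl. Your version is softer and avoids all computation with the symplectic potential, but it leans on the continuous extension $\tilde{\bz}:\bar{P}\rightarrow\bar{\bbH}$ with $\tilde{\bz}(\partial P)\subset\partial\bbH$, which the paper asserts just before the lemma (citing [S]); the paper's computational proof is self-contained on this point and yields smoothness of $r^o$ across $L$ for free, whereas you recover regularity only a posteriori from holomorphy. Note also that you correctly identified the one interpretive subtlety: the extension agrees with $\bz\circ\phi^o_\bbR$ only on one sheet and takes values in the lower half-plane on the other, exactly the feature the paper exploits later in the degree argument for injectivity, so the two constructions produce the same function.
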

\begin{proof}
As before assume that $0\in \bar{P}$ and that the normals to the facets of $P$ meeting at $0$ are $(1,0)$ and $(0,1)$. There is a single pre-image of $w$ in $X_\bbR$ via $\phi_\bbR$ and two pre-images in $X_\bbR^o$ via $\phi^o_\bbR$. In the above description of $X_\bbR$ from section \ref{real_form} we have written $$\phi_\bbR([\bx_1,\cdots, \bx_d])=(|\bx_1|^2,|\bx_2|^2)$$ where
$$
[\bx_1,\cdots, \bx_d] \in X_\bbR=\{(\bx_1,\cdots, \bx_d):\alpha_i\bx_1^2+\beta_i\bx_2^2-\bx_i^2=\lambda_i, i=3,\cdots, d \}/N_\bbR.
$$
We may as well assume that $w\in l_1^{-1}(0)$ and is not zero. Let $P_j, j=0,\cdots 3$ be the open sets decomposing $\breve{X}_\bbR$ so that $p$ is on $\bar{P_0}\cap \bar{P_1}.$ We have 
$$
u(x,y)=x\log (x)+u_0(x,y)
$$
where $u_0$ is smooth in neighbourhood of $w$ in $P$. Since $r(x,y)=(\det (\Hess(u)))^{-1/2}$ 
$$
r(x,y)=\sqrt{x}\left(u_{0,22}+x\det(\Hess (u_0))\right)^{-1/2}.
$$ 
Because of the first condition on $u$ from (\ref{conditions_u}), $u_{0,22}(0,y)$ is non-vanishing near $w$. Therefore, on $P_0$
$$
r\circ\phi_\bbR([\bx_1,\cdots, \bx_d])={|\bx_1|}\left(u_{0,22}(\bx_1^2,\bx_2^2)+\bx_1^2\det(\Hess (u_0))(\bx_1^2,\bx_2^2)\right)^{-1/2}.
$$ 
The variable $\bx_1$ is not a priori well defined on $X_\bbR$ but we let $\bx_1:\mV_p\rightarrow \bbR$ be defined by
 $$
\bx_1([\bx_1,\cdots, \bx_d]))=
  \begin{cases}
  \;\sqrt{\bx_1^2} ,\, [\bx_1,\cdots, \bx_d]\in P_0\cap \mV_p\\
  -\sqrt{\bx_1^2} ,\, [\bx_1,\cdots, \bx_d]\in P_1\cap \mV_p.\\
 \end{cases}
 $$
This function is smooth in a neighbourhood of $\phi_\bbR^{-1}(w)$.  Set
$$
\tilde{r}([\bx_1,\cdots, \bx_d])={\bx_1}\left(u_{0,22}(\bx_1^2,\bx_2^2)+\bx_1^2\det(\Hess (u_0))(\bx_1^2,\bx_2^2)\right)^{-1/2}, \, [\bx_1,\cdots, \bx_d]\in \mV_p.
$$
This is then smooth and coincides with $r\circ \phi_\bbR$ on $P_0$ where defined. Because $r$ is harmonic on $P$ and $\phi_\bbR:P_0\rightarrow P$ is an isometry then $r\circ \phi_\bbR$ is harmonic on $P_0$ where defined. On the other hand $\tilde{r}_{|P_1}=\tilde{r}\circ {\sigma_1}_{|P_1}$ and $\sigma_1$ arises from the toric action and is therefore an isometry so that $\tilde{r}_{|P_1}$ is harmonic on $P_1$ where defined. We conclude that $\tilde{r}$ is harmonic. We now lift it to a neighbourhood $\mV_p$ of one of the preimages $p\in (\phi_\bbR^o)^{-1}(w)\subset X^o_\bbR,$ to get a harmonic function $r^o$. We can the define ${H^o}$ to be its harmonic conjugate coinciding with $H$ on $P_0$ and ${H^o}+i{r^o}$ is the extension we are seeking.
\end{proof}
\begin{exa}
To illustrate the above lemma consider the case when $P=(\bbR^+)^2$ corresponding to $X=\bbC^2$ and $X_\bbR=\bbR^2$. Assume $X$ is endowed with the flat metric. Then
$\bz(x,y)=x-y+2i\sqrt{xy}$. On the other hand $\phi(z_1,z_2)=(|z_1|^2,|z_2|^2)$ and $\phi_\bbR(x_1,x_2)=((x_1)^2,(x_2)^2)$ where $z_j=x_j+iy_j$ for $j=1,2$ are the coordinates in $\bbC^2$ and therefore $\bz\circ \phi_\bbR(x_1,x_2)=x_1^2-y_1^2+2i|{x_1x_2}|$ on $P=(\bbR^+)^2$. But this clearly extends as 
$$(x_1+ix_2)\mapsto (x_1+ix_2)^2$$
which is holomorphic.
\end{exa}
With this extension and given the fact $\tilde{\bz}$ is bijective on the boundary we will be able to prove that $\bz$ is injective. 
\begin{proof}[Proof of injectivity of $\bz$] Because $\bz:(P,J_r)\rightarrow (\bbH, J_0)$ is holomorphic it has a well defined notion of degree. We need to show that the degree of $\bz$ is $1$. Consider $w_0\in \partial \bbH$ and let $w\in \partial P$ be the single pre-image of $w_0$ via $\tilde{\bz}$. From the lemma there is an extension of $\bz$ to a neighbourhood of $p\in (\phi_\bbR^o)^{-1}(w)\subset X^o_\bbR,$ $\mV_p$. We can chose $p$ to be in the closure of $P^0_0$ and $P^0_1$.

Informally, the idea is that the number of pre-images of a point in $\bbH$ can be calculated via an integral around a loop enclosing all those pre-images.  By enlarging such a loop to enclose $w\in \partial P$ which we identify with $p,$ we can show that this is also the number of pre-images of $w_0$ which is known to be one. The details follow. 

Suppose $\mV_p$ is small enough to admit a complex chart $z:\mV_p\rightarrow \bbC$ and let $\epsilon$ be such that $B_\epsilon(w)\cap P\subset \phi^o_\bbR(\mV_p).$
We start by noticing that 
$$
\forall \epsilon \, \exists \delta: \, \bz^{-1}(B_\delta(w_0))\subset B_\epsilon(w).
$$
This is straightforward. Suppose it is not true. Then there is $\epsilon>0$ for which we can find a sequence $\{w_k\}\subset P$ with $|\bz(w_k)-w_0|\leq 1/k$ and $|w_k-w|>\epsilon$. This sequence is bounded in $P$ (otherwise infinity would be in the pre-image of $w_0\in \partial \bbH$) and admits a convergent subsequence in $\bar{P}.$ The limit $w_\infty$ satisfies $\tilde{\bz}(w_\infty)=w_0$ which as $\tilde{\bz}^{-1}(\partial H)=\partial P$ then forces $w_\infty=w.$ But this contradicts $|w_k-w|>\epsilon$.

Given $w_0'\in B_\delta(w_0)\cap \bbH$,  $\bz^{-1}(w_0')\subset B_\epsilon(w)$,
$$
\# \bz^{-1}(w_0')=\int_\gamma \frac{\frac{d\bz}{dz}(z)dz}{\bz(z)-w_0'},
$$
were $\gamma$ is a closed curve close to $\partial ( B_\epsilon(w_0)\cap P)$ and $z$ is the complex coordinate. Consider a lift of $\gamma$ to $P^0_0\subset X^o_\bbR$ containing $p$ in its closure. If $\epsilon$ is small enough, it is contained in a coordinate chart for $X^o_\bbR$. Now modify this lift slightly to enclose $p\in (\phi^o_\bbR)^{-1}(w_0)$ while remaining within $\mV_p$ and call this modification $\tilde{\gamma}$. Then for the extension $\zeta$ of $\bz\circ \phi^o_\bbR$ to $\mV_p$
$$
\# \zeta^{-1}(w_0')=\int_{\tilde{\gamma}} \frac{\frac{d\zeta}{dz}(z)dz}{\zeta(z)-w_0'},
$$
where $z$ denotes the complex coordinate on $\mV_p$. The quantity 
$$
\int_{\tilde{\gamma}} \frac{\frac{d\zeta}{dz}(z)dz}{\zeta(z)-w_0'}
$$ 
depends continuously on $w_0'$ and is always an integer. By making $w_0'$ tend to $w_0$ we see that 
$$
1=\# \zeta^{-1}(w_0)=\int_{\tilde{\gamma}} \frac{\frac{d\zeta}{dz}(z)dz}{\zeta(z)-w_0'}.
$$
and conclude $\# \zeta^{-1}(w_0')=1$ which yields $\# \bz^{-1}(w_0')=1$ as $\zeta$ takes values in the lower half plane on $P^0_1$. The degree of $\bz$ is $1$ and we are done. 
\end{proof}
\subsection{Surjectivity of $\bz$}
$(P, J_r)$ cannot be biholomorphic to $S^2$ as it isn't compact nor to $\bbC$ as it admits a positive harmonic function. By the uniformization theorem there is a holomorphic map $\kappa:(P,J_r)\rightarrow (\bbH,J_0).$ 

\begin{lemma} 
The map $\kappa$ is extendable to $\partial P.$ The extension is bijective.
\end{lemma}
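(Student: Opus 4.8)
The plan is to read off the boundary behaviour of the uniformising biholomorphism $\kappa\colon(P,J_r)\to\bbH$ from the local conformal model of $(P,J_r)$ along $\partial P$. Concretely I would show that each point of $\partial P$, together with the single ideal point at infinity, is a simple (accessible) prime end of $(P,J_r)$, and then apply the Carathéodory prime-end theorem to conclude that $\kappa$ extends to a homeomorphism of the closures; restricting that homeomorphism to the boundary gives the desired bijection $\partial P\to\partial\bbH$. All the work is therefore in understanding the three types of boundary points: facet interiors, vertices, and the non-compact end. Note that the classification of a boundary point as a simple prime end is a conformally invariant and local notion, so it can be checked intrinsically from the local models, without first realising $(P,J_r)$ as a planar domain.

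Along the interior of a facet the argument is the one already used for $\bz$: by the reflection $\sigma_j$ coming from the torus action and the lift to $X^o_\bbR$, the metric $g_r$, and hence the complex structure $J_r$, extends smoothly (indeed real-analytically) across the facet. Thus each facet interior is a free analytic boundary arc of $(P,J_r)$, across which any conformal map extends analytically by Schwarz reflection. At a vertex I would use the normalisation $\nu_1=(0,1)$, $\nu_2=(1,0)$ and Proposition \ref{boundary_u}: near the vertex $u=x_1\log x_1+x_2\log x_2+f$ with $f$ smooth, so to leading order $\Hess(u)=\mathrm{diag}(x_1^{-1},x_2^{-1})$ and $g_r\sim x_1^{-1}dx_1^2+x_2^{-1}dx_2^2$. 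The substitution $x_1=s^2$, $x_2=t^2$ makes this conformal to $ds^2+dt^2$, so $\zeta=\sqrt{x_1}+i\sqrt{x_2}$ is a conformal coordinate exhibiting a neighbourhood of the vertex as a sector of opening angle $\pi/2$; this is exactly the model $\bz=\zeta^2$ of the flat example. A single reflection across one of the two facets meeting at the vertex opens this angle to $\pi$, so the vertex becomes a regular boundary point and $\kappa$ extends continuously through it.

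The main obstacle is the non-compact end. Here the strictly unbounded hypothesis enters decisively: because the two unbounded edges are not parallel, $\overline{P}\cup\{\infty\}$ is a Jordan region in $S^2=\bbC\cup\{\infty\}$, rather than a region with a cusp or a tangential self-contact at $\infty$, which is what parallel edges would produce. I would analyse the conformal type of the end directly from the known asymptotics of $g_r$, Euclidean in the ALE case and ALF in the Taub-NUT case, in order to show that the end of $(P,J_r)$ is conformally a sector of finite opening angle and hence determines a single prime end, to which $\kappa$ extends with a single limiting value in $\partial\bbH\cup\{\infty\}$. This is the delicate step: one must rule out that the end accumulates on a nondegenerate boundary arc, and one must match the limiting value with the already-understood behaviour of $\tilde{\bz}$ at infinity.

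Finally I would assemble the pieces. Having shown that $\partial P\cup\{\infty\}$ consists entirely of simple prime ends of $(P,J_r)$, the prime-end theorem yields a homeomorphic extension $\overline{P}\cup\{\infty\}\to\overline{\bbH}\cup\{\infty\}$ of $\kappa$; bijectivity on the boundary follows immediately, and deleting the point at infinity gives the statement. The only genuinely delicate point is the behaviour at the end, and it is precisely there that the hypothesis of strict unboundedness is indispensable.
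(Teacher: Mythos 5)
Your treatment of the finite boundary is workable and, in essence, parallels the paper's (the paper also produces local holomorphic charts by lifting to $X^o_\bbR$, and also handles vertices "with a small modification"), but the proposal has a genuine gap at the non-compact end, and it is fatal as written: the argument is circular. This lemma is an ingredient in the proof of surjectivity of $\bz$, hence of Theorem \ref{unique}; at this stage $g$ is an \emph{arbitrary} complete scalar-flat K\"ahler toric metric, and its asymptotics --- ALE, ALF/Taub-NUT, or anything else --- are precisely what the classification theorem is supposed to establish. So "analyse the conformal type of the end directly from the known asymptotics of $g_r$" assumes the conclusion; a priori you know only completeness, positivity of $\Hess(u)$, and the Guillemin boundary behaviour near $\partial P$, none of which controls $g_r$ at infinity. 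The remark that $\bar{P}\cup\{\infty\}$ is a Jordan region of $S^2$ is also beside the point, since that is a statement about the Euclidean structure while the relevant conformal structure is $J_r$; nothing in your local models excludes that the single topological end of $(P,J_r)$ corresponds under $\kappa$ to a nondegenerate interval of prime ends, in which case $\tilde{\kappa}(\partial P)$ would miss an interval of $\partial \bbH$ and bijectivity would fail. A secondary foundational issue: prime ends are not intrinsic to an abstract Riemann surface --- they are defined relative to a compactification (equivalently, via the uniformizing map itself) --- so the assertion that "each point of $\partial P$ is a simple prime end of $(P,J_r)$" already presupposes a comparison between the Euclidean closure $\bar{P}$ and the Carath\'eodory boundary, which is essentially the content of the lemma. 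Your half-disk charts do supply this comparison at finite boundary points (so that part is fixable), but at the end you have nothing. Note also that at a vertex the identity $\zeta=\sqrt{x_1}+i\sqrt{x_2}$ is only the leading-order model: $\Hess(u)$ has smooth correction terms from Proposition \ref{boundary_u}, so $\zeta$ is not exactly $J_r$-holomorphic; the clean fix, and the paper's, is that the lifted metric on $X^o_\bbR$ is smooth near the preimage of the vertex, giving a genuine holomorphic chart.

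The paper avoids any analysis of the end by a global device your local strategy cannot replace. Its technical heart is showing that the boundary arc $s\subset\partial\bbH$ of the image of a local piece is an \emph{interval} --- a global statement about $\kappa$ --- and this is proved by reopening Donaldson's proof of uniformization: the explicit map $F=\beta/(z-z_0)-h-\psi$ is a bijection from $P$ onto $S^2\setminus[0,1]$ with $\kappa=\sqrt{1-1/F}$; Proposition \ref{prop32} gives a uniform bound on $dF$ near $\partial P$, hence a Lipschitz estimate along sequences showing $F$ extends continuously to $\bar{P}$ with boundary values in $[0,1]$, from which $s$ is a segment. Carath\'eodory's theorem (for Jordan domains, applied locally) then yields the extension, and injectivity is obtained by Carath\'eodory's original cross-cut argument combined with Schwarz reflection in a chart on $X^o_\bbR$: if $\tilde{\kappa}$ identified two boundary points it would be constant on a boundary arc, contradicting the isolated-zeros principle. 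None of this requires any knowledge of the metric at infinity --- the end is dealt with only later, in the surjectivity argument, via the isolated-singularity analysis of $f(w)=1/\bigl(\bz\circ\kappa^{-1}(-1/w)\bigr)$. To salvage your route you would need an a priori proof, independent of the classification, that the end of $(P,J_r)$ determines a single simple prime end; absent that, the $F$-based argument is not an optional alternative but the necessary workaround.
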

We will denote the extension by $\tilde{\kappa}:\bar{P}\rightarrow \bar{\bbH}.$
\begin{proof}
Let $w\in \partial P$, $\epsilon>0$. Let $\mV_p$ be a neighbourhood of $p\in (\phi_\bbR^o)^{-1}(w)\subset X^o_\bbR$ and $\varphi_p:\bbD\rightarrow \mV_p$ a complex chart. Assume that $\varphi_p$ is actually defined in larger open set with image a neighbourhood of $\mV_p$ but we want to consider only its restriction to $\bbD$. The point $p$ is on the intersection of the closure of 2 open subsets of $X^o_\bbR$ say $P_0^0$ which we identify with $P$ and $P_0^1.$ Set $\bbD^+=\varphi_p^{-1}(\mV_p\cap P_0^0)$. Then
$$
\kappa \circ \phi_\bbR^o\circ\varphi_p:\bbD^+\rightarrow \kappa (\phi_\bbR^o(\mV_p\cap P_0^0)).
$$
Our first goal is to show $\bbD^+$ and $\kappa (\phi_\bbR^o(\mV_p\cap P_0^0))$ have boundaries which are Jordan curves.

\begin{itemize}
\item The boundary of $\bbD^+$ has two portions.
$$
\partial \bbD^+=\partial \bbD \cap \varphi_p^{-1}(\mV_p\cap P^0_0)\cup \varphi_p^{-1}(\partial P^0_0\cap \mV_p).
$$
The set $\partial \bbD \cap \varphi_p^{-1}(\mV_p\cap P^0_0)$ is a segment in $\partial \bbD$ as $\partial \mV_p\cap P_0^0$ is a Jordan segment and $\varphi_p^{-1}$ is a homeomorphism there. As for  $\varphi_p^{-1}(\partial P^0_0\cap \mV_p),$ this is a Jordan segment as $\partial P^0_0\cap \mV_p$ is.
\item The boundary of $\kappa (\phi_\bbR^o(\mV_p\cap P_0^0))$ also has two portions:
$$
\partial \kappa (\phi_\bbR^o(\mV_p\cap P_0^0))= \kappa (\phi_\bbR^o(\partial \mV_p\cap P_0^0)\cup s
$$
where $s$ is a subset of $\partial\bbH$ which we will show is an interval. Note that $\kappa (\phi_\bbR^o(\partial \mV_p\cap P_0^0))$ is a Jordan segment because $\partial \mV_p\cap P_0^0$ and $\kappa \circ \phi_\bbR^o$ is a homeomorphism on $P^0_0$. Showing that $s$ is an interval turns out to be rather technical.  The details are as follows.
\begin{lemma}
 The subset $s$ of $\partial \bbH$ defined above is an interval.
 \end{lemma}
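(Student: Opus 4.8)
The plan is to reduce the claim to a one–variable statement by exploiting the injectivity of $\bz$ just established. Since $\bz\colon(P,J_r)\to(\bbH,J_0)$ is holomorphic and injective, it is a biholomorphism onto the open set $\bz(P)\subset\bbH$; hence $F:=\kappa\circ\bz^{-1}\colon\bz(P)\to\bbH$ is a biholomorphism of $\bz(P)$ onto $\kappa(P)=\bbH$, and in particular is proper. Using the local holomorphic extension $\zeta=\bz\circ\phi_\bbR^o$ of the previous lemma, which is a biholomorphism near the non-vertex point $p$, I would shrink $\mV_p$ so that $\zeta(\mV_p)$ is a disc straddling $\partial\bbH$ and $D^+:=\zeta(\mV_p)\cap\bbH=\bz(\phi_\bbR^o(\mV_p\cap P_0^0))$ is a half-disc. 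Its boundary splits as $\partial D^+=A\cup I$, where $A:=\zeta(\partial\mV_p\cap P_0^0)\subset\bbH$ and $I:=\zeta(\mV_p\cap\partial P_0^0)\subset\partial\bbH$ is the flat portion coming from $\partial P$, along which $r$ vanishes. Then $\Omega:=\kappa(\phi_\bbR^o(\mV_p\cap P_0^0))=F(D^+)$, the arc $A$ lies in $\bz(P)$ with $F(A)=\kappa(\phi_\bbR^o(\partial\mV_p\cap P_0^0))$ the Jordan arc already identified, and the set to understand is $s=\overline{\Omega}\cap\partial\bbH$.

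The crux is to show that the cluster set of $F$ along the open segment $\mathrm{int}(I)$ is contained in $\partial\bbH$. I would prove this by properness: if $\zeta_n\in D^+$ with $\zeta_n\to\zeta^\ast\in\mathrm{int}(I)$, then since $\zeta^\ast\in\partial\bbH$ does not lie in $\bz(P)$, the sequence leaves every compact subset of $\bz(P)$; properness of $F$ forces $F(\zeta_n)$ to leave every compact subset of $\bbH$, so in $\hat\bbC=\bbC\cup\{\infty\}$ it accumulates only on $\partial\bbH\cup\{\infty\}$. After normalising $\kappa$ by an element of $\Aut(\bbH)$ so that $\infty$ is not a cluster value at $p$, every such limit lies in $\partial\bbH=\bbR$. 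This is the step I expect to be the main obstacle, since it is exactly where one must exclude the image region folding back into the interior of $\bbH$ or escaping to infinity; it is the injectivity of $\bz$, equivalently the properness of $F$, that makes it go through.

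Finally I would establish connectedness. Let $C_0$ denote the cluster set of $F$ along $\mathrm{int}(I)$. Exhausting $\mathrm{int}(I)$ by compact subarcs $I'$, for small $\delta$ the neighbourhood $D^+\cap N_\delta(I')$ is connected, so each $\overline{F(D^+\cap N_\delta(I'))}$ is a continuum in $\hat\bbC$; intersecting over $\delta$ yields a continuum $C(I')$, and as $I'$ increases to $\mathrm{int}(I)$ the sets $C(I')$ increase, so $C_0=\bigcup_{I'}C(I')$ is connected. By the previous paragraph $C_0\subset\partial\bbH=\bbR$, hence $C_0$ is an interval. It remains to identify $s$ with $C_0$ up to endpoints: if $\zeta_n\in D^+$ and $F(\zeta_n)\to q\in\partial\bbH$, then no subsequence can converge to an interior point of $D^+$ or to a point of $A$ (since $F$ sends these into $\bbH$), so $\zeta_n\to\overline{I}$ and $q\in\overline{C_0}$; thus $C_0\subseteq s\subseteq\overline{C_0}$. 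Any set trapped between a connected set and its closure is connected, so $s$ is an interval, as claimed.
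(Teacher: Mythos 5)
Your strategy is genuinely different from the paper's. The paper does not argue through cluster sets at all: it opens up Donaldson's proof of uniformization, writing $\kappa=\sqrt{1-1/F}$ with $F=\beta/(z-z_0)-h-\psi$, uses the approximation result (Proposition \ref{prop32}) to bound $dF$ near $\partial P$, deduces that $F$ is Lipschitz along segments and hence extends continuously to $\bar P$ with boundary values in the slit $I=[0,1]$, and then reads off $s$ as the image of an interval $[a,b]\subset[0,1]$ under $w\mapsto\sqrt{1-1/w}$, using connectedness to see the image stays on one side of the slit. Your route instead leans on the injectivity of $\bz$ and the local extension $\zeta$ (both legitimately available at this point of the paper), replacing the quantitative gradient estimate by soft properness of $\kappa\circ\bz^{-1}$. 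That is an attractive trade: you avoid the internals of the uniformization construction entirely, at the price of having to control cluster sets rather than getting genuine boundary continuity.

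However, there is a genuine gap at the step ``after normalising $\kappa$ by an element of $\Aut(\bbH)$ so that $\infty$ is not a cluster value at $p$.'' Such a normalisation exists only if the cluster set omits at least one point of the circle $\bbR\cup\{\infty\}$, and nothing in your argument rules out that it is the whole circle, or more relevantly that it is an arc passing through $\infty$. This is not cosmetic: your connectedness argument produces a continuum $C_0$ in $\bbR\cup\{\infty\}$, and a subarc of that circle whose interior contains $\infty$ intersects $\bbR$ in the union of two disjoint rays --- precisely not an interval. So the conclusion ``$C_0\subset\bbR$, hence $C_0$ is an interval'' needs an actual proof that $\infty$ is avoidable. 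The cleanest repair within your framework is to pass to the disc model, $G=T\circ F$ with $T:\bbH\to\bbD$ a M\"obius map: your properness argument gives $|G|\to 1$ along the open segment $\mathrm{int}(I)$, so Schwarz reflection extends $G$ holomorphically across $\mathrm{int}(I)$; injectivity of $G$ on $D^+$ then forces injectivity of the boundary extension on $\mathrm{int}(I)$, so the cluster set is an injective continuous image of an interval in $\partial\bbD$, which omits a point, and the normalisation becomes legitimate. (This reflection argument also supersedes the continuum machinery, since it yields genuine continuity along $\mathrm{int}(I)$.)

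A second, smaller leak is at the endpoints in your sandwich $C_0\subseteq s\subseteq\overline{C_0}$: your exclusion argument only shows $\zeta_n\to\overline{I}$, and $C_0$ is the cluster set along $\mathrm{int}(I)$; a point $q\in s$ arising from a sequence converging to an \emph{endpoint} of $I$ lies in the endpoint cluster set $C_e$, which need not be contained in $\overline{C_0}$. This is repairable --- your properness argument applies equally at the endpoints, so each $C_e$ is a continuum in $\bbR\cup\{\infty\}$, and a diagonal argument shows $C_e\cap\overline{C_0}\neq\emptyset$, whence $s$ is a union of connected sets with pairwise common points and is still connected --- but as written the step ``so $\zeta_n\to\overline{I}$ and $q\in\overline{C_0}$'' does not follow. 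Finally, two minor points: your assertion that $\zeta$ is a biholomorphism near the non-vertex point $p$ is true but deserves a line (from the paper's formula for $\tilde r$, $\partial\tilde r/\partial\bx_1(p)=u_{0,22}^{-1/2}\neq 0$ since $u_{0,22}$ is non-vanishing, so $d\zeta(p)\neq 0$), and your letter $F$ collides with the paper's $F$ from Donaldson's construction.
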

 \begin{proof}
For the sake of completeness and because we will use the details here we recall Donaldson's proof the uniformization theorem from \cite{dRS}. Let $o\in P$ and consider a holomorphic coordinate $z$ around $o$ as well as a bump function around $o$ whose support is contained in a ball in the coordinate neighbourhood. We let $z_0$ denote the coordinate of $o$. Let
$$
A=\op\left(\frac{\beta}{z-z_0}\right).
$$  
This is a smooth $(0,1)$ form. There exists $h,$ a smooth function on $P$ such that $\p\op h=\p A$. The existence of such an $h$ is the core of the uniformization theorem and we simply make use of $h$. From proposition 32 in \cite{dRS} we know:
\begin{prop}[Donaldson]\label{prop32}
There is a sequence of smooth functions $h_i$ on $P$ such that
\begin{itemize}
\item $h_i=c_i$ a constant outside a compact set $B_i$ in $P.$\\
\item $||dh-dh_i||_\infty $ tends to zero as $i$ tends to $\infty$.\\ 
\end{itemize}
\end{prop}
Set $a=A-\op h+\overline{A-\op h}$ so that $da=0$ and let $\psi$ be a primitive of $a$ i.e. $d\psi=a.$ Consider as in \cite{dRS}
$$
F=\frac{\beta}{z-z_0}-h-\psi.
$$
This is a holomorphic map to $S^2$ by design and bijective onto $S^2\setminus I$ where $I$ is some closed interval in $\bbR$. We may as well assume that $I=[0,1].$ The map $\kappa$ is then given by
$$
\kappa(z)=\sqrt{1-\frac{1}{F(z)}}.
$$
Although as we will see $F$ extends by continuity to $\partial P$, the square root does not extend by continuity from $\bbC\setminus [0,+\infty[$ to $\bbC.$ So, it does not follow from the above that $\kappa$ extends to the boundary although we will eventually prove that in our case, i.e for $J_r$ on $P,$ it does.  Let $\{w_k\}$ be a sequence in $P$ tending to $w\in \partial P.$ We show that $\{F(w_k)\}$ converges. 
\begin{itemize}
\item First note that $dF$ is bounded in a neighbourhood of $w$.  In fact we may assume that $F=-h-\psi$ so that $dF=\p (\bar{h}-{h})$ which uniformly bounded from proposition (\ref{prop32})
$$|dF|\leq 2|dh|\leq 2\left(1+|dh_{i_0}|\right)$$ for $i_0$ sufficiently large which is bounded by a constant $K$ in a neighbourhood of $w$.
\item Next note
$$
F(w_{k'})-F(w_k)=F_1(1)-F_1(0)+i(F_2(1)-F_2(0))
$$
where $F_1$ and $F_2$ are the restrictions of the real and imaginary parts of $F$ to the segment $w_k+t(w'_k-w_k)$. By Taylor's theorem 
$$
F_1(t)-F_1(0)=F'_1(t_1)t, \quad F_2(t)-F_1(0)=F'_1(t_2)t
$$ 
for some $t_1$ and $t_2$ in $[0,1]$ and
$$
F'_1(1)=\Real\left(\frac{dF}{dz}\right)(w_{k'}-w_k), \quad F'_2(2)=\Imag\left(\frac{dF}{dz}\right)(w_{k'}-w_k)
$$
so that 
$$
|F'_1(t_1)|\leq K |w_{k'}-w_k|, \quad |F'_2(t_2)|\leq K|w_{k'}-w_k|
$$
as both $w_k+t_1(w_{k'}-w_k)$ and $w_k+t_2(w_{k'}-w_k)$ are in the small ball around $w$ where $dF$ is bounded by $K$. We thus have $|F(w_{k'})-F(w_k)|\leq K|w_{k'}-w_k|$ and $\{F(w_k)\}$ converges to an element in $S^2$. 
\end{itemize}
Because it is known that the inverse of $F$ is well defined and continuous on $S^2\setminus I$, this limit cannot be in $S^2\setminus I$ (otherwise this would force $w_k$ to converge to $F^{-1}$ of the limit). Hence we have constructed an extension $\tilde{F}: \bar{P}\rightarrow S^2$ of $F:P\rightarrow S^2\setminus I$. 

It follows from the fact that $F$ extends to the boundary of its domain as a continuous function that $\tilde{F} (\phi_\bbR^o( \mV_p)\cap \partial P)$ is an interval in $[0,1]$ say $[a,b]$ with $0<a<b<1$. In fact, because it is connected, $F (\phi_\bbR^o( \mV_p))$ must be on ``one side" of the interval $[0,1]$ in $S^2$ and 
$$
s=\left[\sqrt{1-\frac{1}{b}},\sqrt{1-\frac{1}{a}}\right] \text{or} \left[-\sqrt{1-\frac{1}{a}},-\sqrt{1-\frac{1}{b}}\right],
$$
so $s$ is a segment.
\end{proof}
\end{itemize}
We have thus proved that $\kappa \circ \phi_\bbR^o\circ\varphi_p:\bbD^+\rightarrow \kappa (\phi_\bbR^o(\mV_p\cap P_0^0))$ is an bijective map from two simply connected domains whose boundary is a Jordan curve.

Now it follows from Carath\'eodory's theorem that this map can be extended (as a homeomorphism) to the boundary. Because the point we were considering in $\partial P$ is arbitrary (the reasoning applies to vertices of $P$ with a small modification) we are done.


Let $\tilde{\kappa}:\bar{P}\rightarrow \bar{\bbH}$ be the extension. We proceed to show that it is injective. This is almost exactly as Carath\'eodory's original argument. Suppose the extension is not injective. Let $w$ and $v$ be two points in $\partial P$ such that $\tilde{\kappa}(w)=\tilde{\kappa}(v)$. Let $o$ be a point in the interior of $P$. Denote the segments $ow$ and $ov$ as $W$ and $V$ respectively. The Jordan curve $\kappa(W\cup V)$ goes through $\kappa(o)$ and $\tilde{\kappa}(w)=\tilde{\kappa}(v)$. Let $A$ be the interior of this curve and $B=\kappa^{-1}(A)$. Consider $Z$ a (maybe broken) segment in $\partial P$ connecting $w$ to $v$. Then
$$
\tilde{\kappa}(Z)\subset \partial A\cap \partial \bbH=\{\tilde{\kappa}(w)\}
$$
so that $\tilde{\kappa}$ is constant on $Z$. We may assume the constant is $1$. Now consider $\mV_p$ a neighbourhood of $p\in (\phi_\bbR^o)^{-1}(w)\subset X^o_\bbR,$ $\varphi_p:\bbD\rightarrow \mV_p$ a complex chart and
$$
{\kappa} \circ \phi_\bbR^o\circ\varphi_p:\bbD^+\rightarrow \kappa (\phi_\bbR^o(\mV_p\cap P_0^0)).
$$
where as before $\bbD^+=\varphi_p^{-1}(\mV_p\cap P_0^0)$. Now consider $u:\bbD\rightarrow \bbD^+$ given by Riemann's mapping theorem and 
$$
{\kappa} \circ \phi_\bbR^o\circ\varphi_p\circ u :\bbD \rightarrow \kappa (\phi_\bbR^o(\mV_p\cap P_0^0)).
$$
which by Carath\'eodory's theorem extends to $\partial \bbD$. The function $\phi_\bbR^o\circ\varphi_p\circ u$ also extends because by Carath\'eodory's theorem $u$ extends. Therefore, $\widetilde{(\phi_\bbR^o\circ\varphi_p\circ u)}^{-1}(Z_p),$ where $Z_p$ is the portion of $Z$ in $\phi_\bbR^o(\mV_p),$ and $\widetilde{(\phi_\bbR^o\circ\varphi_p\circ u)}$ denotes the extension of ${\phi_\bbR^o\circ\varphi_p\circ u},$   is actually an arc in $\bbD$. On this arc, the extension of ${\kappa \circ \phi_\bbR^o\circ\varphi_p\circ u}$ is constant and equal to 1. By Schwarts reflection principle it thus extends further to a neighbourhood of the given arc. But this arc has accumulation points and this brings about a contradiction as the zero sets of holomorphic functions are isolated in their domains of definition. We could similarly prove that the inverse of the extension is injective so the extension is bijective. 
\end{proof}

We are now in a position to prove the surjectivity of $\bz$. 
Let $\mU={z}(P).$ We have that
$$
\partial \mU=\partial\mU\cap \partial\bbH \cup \partial\mU\cap \bbH
$$ 
and we denote the second portion of the boundary as $\partial_0\mU=\partial\mU\cap \bbH.$ The goal is to show that $\partial_0\mU=\emptyset.$ Let $\bz_\kappa=\bz\circ \kappa^{-1}: \bbH\rightarrow \bbH.$ This map is holomorphic, injective and can be extended to the boundary $\partial P$ as a bijection $\partial P\rightarrow \bbH$ as both $\bz$ and $\kappa$ can be extended to the boundaries of their domains of definition. Let
$$
f(w)=\frac{1}{\bz\circ \kappa^{-1}\left(\frac{-1}{w}\right)}:\bbH \rightarrow \bbH.
$$
This map is holomorphic and maps $\bbR^*\subset \bbC$ to $\bbR$. By Schwarts reflection principle the formula $f(w)=\ov{f(\ov{w})}$ extends $f$ as a holomorphic function on $\bbC^*$ which we still denote by $f$ and which is still injective. The point $0$ is an isolated singularity of $f$.
\begin{itemize}
\item This singularity cannot be essential as by Picard's theorem $f$ could not be injective.
\item If the singularity is a pole, then it has to be a simple pole so as to not violate the injectivity of $f.$
\item The singularity may be removable.
\end{itemize}
But $\mU=\bz_\kappa(\bbH)$
The first thing to note is that
$$
\partial_0 \mU=\{\lim_k \bz_\kappa(w_{n_k}), \, (w_k)\, \text{unbounded}, \, (w_{n_k}) \,\text{subsequence of} \,(w_{k}) \}.
$$
This is because if $z\in \partial_0\mU$ then there is a sequence $(w_k)$ in $\bbH$ such that $\bz_\kappa(w_k) \rightarrow z.$ Assuming the sequence $(w_k)$ to be bounded would lead to a contradiction. Namely, if it were bounded it would have a convergent subsequence $(w_{n_k})$ in $\bar\bbH$ so that $\bz_\kappa(w_{n_k})\rightarrow \tilde{\bz}_\kappa(w)$ where $w$ denotes the limit of the considered subsequence $(w_{n_k})$ in $\bar{P}.$ If $w\in P$ then $\tilde{\bz}_\kappa(w)=\bz_\kappa(w)\in \mU$ but because $\mU$ is open this is incompatible with the assumption that $z\in \partial \mU$. If $w\in \partial P$, then $\tilde{\bz}_\kappa(w)\in \partial \bbH,$ which is incompatible with the assumption that $z\in \bbH$.  It follows that
$$
\partial_0 \mU=\{\lim_k f(w_{n_k}), \, \{w_k\}\subset \bbH, \, w_k\rightarrow 0, \, (w_{n_k}) \,\text{subsequence of} \,(w_{k}) \}.
$$
We have shown that $$f(0):=\{\lim_k f(w_{n_k}), \, \{w_k\}\subset \bbH, \, w_k\rightarrow 0, \, (w_{n_k}) \,\text{subsequence of} \,(w_{k}) \}$$ is either $\infty$ or a single point. Assume the latter. Let $a$ be this single point. Then $({\bz}_\kappa)^{-1}:\mU\rightarrow \bbC$ which is holomorphic has an isolated pole at $a.$ No such function can have image contained in $\bbH$ as one can see by considering the image of a neighbourhood of the pole.

\section{Uniqueness of the two families of scalar-flat K\"ahler toric metrics}
In this section the aim is to prove Theorem \ref{unique}. Note that the isothermal coordinates $(H,r)$ can also be described as satisfying
$$
u_{ij}dx_i\otimes dx_j=V\left(dH\otimes dH+dr\otimes dr\right),
$$
where $u_{ij}dx_i\otimes dx_j$ is the metric induced by $g$ on $X_\bbR$. We want to think of $(H,r)$ as a map of $(x_1,x_2).$ From what we showed in the previous section, this is a bijection from $P$ to $\bbH.$

\begin{proof}[Proof of Theorem \ref{unique}]
Without loss of generality we may assume that the unbounded edges of $X$ meet at the origin. The interior of the moment map image of $X$, $P$ is contained in the region
\[
\{x\in \bbR^2: x\ip \nu_1> 0,x\ip \nu_d >0 \}
\]
where $\nu_1$ and $\nu_d$ are the normals to the unbounded edges. Suppose we are given any K\"ahler toric metric on $X$ coming from a complete almost complex structure $J$. We can associate to $g$ the following quantities
\begin{itemize}
\item a symplectic potential $u$,
\item $\eta=(u_{x_1},u_{x_2})$. 
\end{itemize}
$X$ admits an ALE metric which we will denote by $g_{ALE}$ as before. In this case we write
\begin{itemize}
\item  $u_{ALE}$ for the symplectic potential of $g_{ALE}$,
\item and $\eta_{ALE}$ for the derivative of $u$ with respect to $(x_1,x_2)$. 
\end{itemize}

By reversing the construction in Section \ref{joyce_construction} we see that there are isothermal coordinates $(H,r)\in \bbH$ depending on $g$ and a map $\mu$ on $\bbH$ which gives the coordinates change between $(H,r)$ and symplectic coordinates $(x_1,x_2)$. We will sometimes refer to this map as the moment map for the torus action but it really is the moment map expressed in the coordinates $(H,r)$. This is the inverse of the map $\bz$ studied in the previous section and it follows from what we did there that $\mu$ is defined on the whole of $\bbH.$ There is also a function $\xi(H,r)=\eta \circ \mu(H,r)$ which is a solution of equation (\ref{PDE}) and can be seen as a harmonic, axi-symmetric function on $\bbR^3$. We will use the following notation:
\begin{itemize}
\item $u_0=u_{ALE}-u$,
\item $\eta_0=\eta_{ALE}-\eta$,
\item $\mu_0=\mu_{ALE}-\mu$,
\item and $\xi_0=\xi_{ALE}-\xi$. 
\end{itemize}
\begin{lemma}
There is a smooth function $f$ on $\bar{\bbH}$ such that 
\[
\mu_0=r^2f.
\]
\end{lemma}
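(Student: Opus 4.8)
The plan is to deduce $\mu_0=r^2f$ from two independent inputs: (i) that $\mu$ and $\mu_{ALE}$ have \emph{identical} boundary values on $\partial\bbH=\{r=0\}$, and (ii) that each of $\mu$ and $\mu_{ALE}$, regarded as a map $\bbH\to\bbR^2$, extends smoothly across $\{r=0\}$ as an \emph{even} function of $r$. Input (i) is immediate from Proposition~\ref{boundary_u}: since $u-u_P$ and $u_{ALE}-u_P$ are both smooth up to $\bar{P}$, the two symplectic potentials share the same singular boundary data, so the boundary extensions $\tilde{\bz}$ agree, and hence so do their inverses, $\mu|_{\partial\bbH}=\mu_{ALE}|_{\partial\bbH}$. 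Granting (ii) as well, $\mu_0$ is smooth and even in $r$, so $\mu_0=N(H,r^2)$ for a smooth $N$; and since $\mu_0$ vanishes on $r=0$ by (i), Hadamard's lemma factors $N(H,R)=R\,\tilde N(H,R)$ with $\tilde N$ smooth, yielding $\mu_0=r^2\tilde N(H,r^2)=r^2f$.

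The substance is input (ii), which I would establish near a non-vertex boundary point $w\in\partial P$ exactly as in the injectivity argument. Let $\sigma$ be the reflection of $X^o_\bbR$ fixing the preimage of the facet through $w$; it arises from the $(\pm1,\pm1)$-action and so satisfies $\phi^o_\bbR\circ\sigma=\phi^o_\bbR$, because $\phi^o_\bbR$ records $(|z_1|^2,|z_2|^2)$ and is torus invariant. Let $\zeta=\bz\circ\phi^o_\bbR$ be the holomorphic extension across $\partial P$ constructed above; it is injective on a neighbourhood $\mV_p$ of $p\in(\phi^o_\bbR)^{-1}(w)$ and carries $\Fix(\sigma)$ into $\partial\bbH\subset\bbR$. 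Since $\zeta$ maps this real arc to $\bbR$, Schwarz reflection gives $\zeta\circ\sigma=\bar\zeta$, hence $\zeta^{-1}(\bar{\bz})=\sigma(\zeta^{-1}(\bz))$ near $\tilde{\bz}(w)$. Writing $\mu=\phi^o_\bbR\circ\zeta^{-1}$ we then get
\[
\mu(\bar{\bz})=\phi^o_\bbR\bigl(\sigma(\zeta^{-1}(\bz))\bigr)=\phi^o_\bbR\bigl(\zeta^{-1}(\bz)\bigr)=\mu(\bz),
\]
i.e. $\mu(H,-r)=\mu(H,r)$. As $\zeta^{-1}$ is holomorphic on a full neighbourhood of $\tilde{\bz}(w)$ in $\bbC$ and $\phi^o_\bbR$ is smooth, $\mu$ extends smoothly across $\{r=0\}$; combined with the evenness this gives $\mu(H,r)=M(H,r^2)$ with $M$ smooth. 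The identical argument applies to $\mu_{ALE}$.

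The main obstacle is the behaviour at the vertices of $P$. There $\zeta$ has a critical point and $\zeta^{-1}$ a branch point, so $\mu$ and $\mu_{ALE}$ individually are only Lipschitz at the vertex image: already in the flat corner model one computes $\mu=\bigl(\tfrac12(\sqrt{H^2+r^2}+H),\tfrac12(\sqrt{H^2+r^2}-H)\bigr)$, which is not smooth at the origin. The saving point is that this non-smooth model is dictated solely by the Delzant corner, i.e. by the common singular part $x_1\log x_1+x_2\log x_2$ of the two symplectic potentials; it is therefore the same for $\mu$ and $\mu_{ALE}$ and cancels in the difference. I would make this precise by subtracting the flat corner model from each map and using that $u_0=u_{ALE}-u$ extends smoothly over the vertex (again Proposition~\ref{boundary_u}) to show that the remainder is even in $r$ and smooth up to and across the vertex image. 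This upgrades $\tilde N$, and hence $f$, to a smooth function on all of $\bar\bbH$, completing the argument.
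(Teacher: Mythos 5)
Your route is genuinely different from the paper's: the paper never works with $\mu_0$ directly by reflection, but instead reduces everything to the function $\xi_0=\xi_{ALE}-\xi$, shows it is bounded near the whole axis $\{r=0\}$ (using the explicit formula for $\xi_{ALE}$, the lemma pinning down the vertex positions via $a_{i+1}-a_i=\text{length}(e_{i+1})/(2\pi|\nu_i|^2)$, and the Guillemin boundary behaviour $(\det\Hess u)^{-1}=\gamma\prod_i l_i$, so that the troublesome ratio is $\log(\gamma/\gamma_{ALE})$), and then invokes the removable-singularity/mean-value property of axi-symmetric harmonic functions to extend $\xi_0$ analytically across the axis, after which $D\mu_0=r\,(\cdots)$ yields the factorization. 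Your inputs (i) and (ii) are sound away from the vertices: the reflection identity $\zeta\circ\sigma=\bar\zeta$ and the resulting evenness of $\mu$ in $r$ are correct, and combined with matching boundary values and Hadamard's lemma they do give $\mu_0=r^2f$ with $f$ smooth off the vertex images. (One caveat on (i): it is not ``immediate'' from Proposition \ref{boundary_u} alone; matching the boundary extensions requires matching the vertex positions on the $H$-axis, which is exactly the content of the paper's edge-length lemma, so you should prove or cite that fact rather than fold it into boundary regularity of $u_0$.)

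The genuine gap is the vertex step, and it is not merely technical. In the branched coordinate $w=\sqrt{\bz-a_i}=s+it$, your corner-subtraction argument can at best show that $\mu_0$ is a smooth Klein-invariant function of $w$ vanishing on the two facet arcs, i.e.\ $\mu_0=G_0(s^2,t^2)$ with $G_0(u,v)$ smooth and divisible by $uv$. Since $4s^2t^2=r^2$ this gives $\mu_0=r^2\,\tilde G(s^2,t^2)$, but $s^2=\tfrac12(\rho_i+H_i)$ and $t^2=\tfrac12(\rho_i-H_i)$ are only Lipschitz in $(H,r)$ at $(a_i,0)$, so this yields $f$ continuous and bounded there, not smooth. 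The loss is real within your framework: $G_0(u,v)=u^2v$ is smooth, invariant, and vanishes on both facets, yet produces $f=\tfrac18(\rho_i+H_i)$, which is not smooth at the vertex image; nothing in ``subtract the flat corner model, use smoothness of $u_0$, evenness, Hadamard'' excludes such terms, because your proof never invokes the PDE. What excludes them is harmonicity: either the paper's mechanism ($\xi_0$ is axi-symmetric harmonic and bounded near the entire axis, vertices included, hence extends analytically across it), or a patch of your own argument — you already have $f$ smooth on $\bbH$ and bounded near the vertex images, so Wright's computation gives $f_{HH}+f_{rr}+3f_r/r=0$ off the axis, i.e.\ $f$ is harmonic on $\bbR^5$ minus a line, and boundedness makes that line a removable singularity. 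As written, however, the concluding claim that the corner subtraction ``upgrades $\tilde N$, and hence $f$, to a smooth function on all of $\bar{\bbH}$'' is unjustified.
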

\begin{proof}[proof of the lemma] We start by proving that $\mu_0$ extends as an analytic function to $\bbH$. From Proposition \ref{boundary_u} it follows that $u_0$ is smooth on a neighbourhood of $\partial P$ thus $\eta_0$ is bounded on a neighbourhood of each point in $\partial P$. We have
\[
D\mu_0=r\begin{pmatrix}
\frac{\partial\xi_{0,2}}{\partial r} & -\frac{\partial\xi_{0,2}}{\partial H} \\
-\frac{\partial\xi_{0,1}}{\partial r} &   \frac{\partial\xi_{0,1}}{\partial H}
\end{pmatrix}.
\] 
This follows from
 \begin{align}\nonumber
 &dx_1=r\left(\frac{\partial \xi_2}{\partial r} dH-\frac{\partial \xi_2}{\partial H} dr \right)\\ \nonumber
 &dx_2=-r\left(\frac{\partial \xi_1}{\partial r} dH-\frac{\partial \xi_1}{\partial H} dr \right),
 \end{align}
which holds for both $\mu$ and $\mu_{ALE}$. Thus, we need only to prove that $\xi_0$ extends as an analytic function to $\bar{\bbH}$. Since $\xi_0$ is harmonic outside the $H$ axis, a standard argument using the mean value property for harmonic functions ensures it is enough to see that $\xi_0$ is bounded in a neighbourhood of each point in the $H$ axis. We have $\xi_0=\eta_{ALE}\circ\mu_{ALE}-\eta\circ\mu$. This is the sum of the terms
\begin{enumerate}
\item $\eta_{ALE}\circ\mu_{ALE}-\eta_{ALE}\circ\mu$,
\item $\eta_{ALE}\circ\mu-\eta\circ\mu$.
\end{enumerate}
The second term is indeed bounded because $\eta_{ALE}-\eta$ is and $\mu$ extends as a continuous function to $\bar{\bbH}$. So what we need to do is show that the first term is bounded. 
We start by setting up some notation. Let $0<a_1<\cdots<a_{d-1}$ be positive real numbers (which are in fact determined by $P$ as explained in \cite{as}). Set $a_0=-\infty$ and $a_d=+\infty$. For $i=1,\cdots, d-1$ we define
\begin{itemize}
\item $H_i=H+a_i$,
\item  $\rho=\sqrt{H^2+r^2}$ and
\item $\rho_i=\sqrt{H_i^2+r^2}$.
\end{itemize}
Rewrite the first term as $\xi_{ALE}-\xi_{ALE}\circ (\mu_{ALE}^{-1}\circ\mu)$. We have an explicit expression for $\xi_{ALE}$ from \cite{as}. Namely
\begin{align}\nonumber
&\xi_{ALE,1} = \alpha_1\log(r)+\frac{1}{2}\sum_{i=1}^{d-1}(\alpha_{i+1}-\alpha_{i})\log\left( H_i+\rho_i\right)+\alpha H \\ \nonumber
&\xi_{ALE,2} = \beta_1\log(r)+\frac{1}{2}\sum_{i=1}^{d-1}(\beta_{i+1}-\beta_{i})\log\left( H_i+\rho_i\right)+\beta H.
\end{align}
Close to $\partial \bbH$, $\xi_{ALE}$ has a singularity and its singular behaviour is as follows
\[
H \in ]-a_{i+1},a_i[, r\rightarrow 0 \implies \xi_{ALE}= \nu_i \log r+O(1).
\]
This can be checked using the above formula but in fact, this needs to hold for any scalar-flat K\"ahler toric metric. Since we will use this fact, we explain why it holds. The symplectic potential $u$ satisfies $u=l_i\log l_i+O(1)$ near the $i$th facet of the moment map image. This implies that $\eta=\nu_i\log l_i+O(1)$ near that facet. But we have 
\[
r^2=\det (\Hess u)^{-1}=\gamma \prod_{i=1}^d l_i.
\]
The Guillemin boundary condition for $u$ shows that there is a nowhere vanishing smooth function $\gamma$ defined near $\partial P$ such that 
\[
(\det \Hess u)^{-1}=\gamma \prod_{i=1}^d l_i ,
\]
 and thus, near the interior of the $i$th facet of $P$, $\log l_i=\log r^2+O(1)$, where $O(1)$ denotes a function that is bounded in a neighbourhood of every point in $\partial P$. It follows that
\[
 H\in]-a_{i+1},a_i[, r\rightarrow 0 \implies \xi=\nu_i \log r+O(1).
\] 
We see that in order to show that $\xi_{ALE}-\xi_{ALE}\circ (\mu_{ALE}^{-1}\circ\mu)$ remains bounded as $r$ tends to zero we need to show two things
\begin{enumerate}
\item \label{H_relation} We need to show
\[
H\in]-a_{i+1},a_i[ \iff H'\in]-a_{i+1},a_i[
\]
where $H'$ is the $H$ coordinate of the map $\mu^{-1}={\bf z}$.
\item \label{r_relation} We also need to show that
\[
\log\frac{r}{r(\mu_{ALE}^{-1}\circ\mu)}
\]
is bounded. 
\end{enumerate}
First we prove (\ref{H_relation}). Let $a'_i$ be such that $\mu(a'_i,0)$ is the $i$th vertex of $P$ for $i=1,\cdots, {d-1}$. What we want to show is that $a'_i=a_i$ for all $i=1,\cdots, d-1$.  We may assume that $a_1=a'_1$. We claim that $a'_{i+1}-a'_i$ does not depend on the metric $g$. It is in fact given by
\[
a'_{i+1}-a'_i=\frac{\text{length}(e_{i+1})}{2\pi|\nu_i|^2},\,\,\, i=1,\cdots,d-1
\]
where $\text{length}(e_{i+1})$ is the length of the $i+1$ edge of $P$. We prove the following lemma:
\begin{lemma}
Consider any toric metric on $X$ and let $(H,r)$ be given as before by the map $\mu^{-1}={\bf z} $. Let $(a_i,0)=\mu^{-1}(p_i)$ where $p_i$ is the $i$-the vertex of $P$, for $i=1,\cdots, {d-1}$. Then
\[
a_{i+1}-a_i=\frac{\text{length}(e_{i+1})}{2\pi|\nu_i|^2},\,\,\, i=1,\cdots,d-1
\]
where $\text{length}(e_{i+1})$ is the length of the $i+1$ edge of $P$.
\end{lemma}

\begin{proof} Let $E_i$ be the pre-image via the moment map in $X$ of the $i$-th edge of $P$. For $i=2,\cdots,d-1$, the set $E_i$ is an $S^2$ and its volume is precisely the length of $e_i$. On the other hand we can calculate this volume using the $(H,r)$ coordinates. In fact
\[
\text{vol}(E_i)=\int_{E_i} \omega
\]
and 
\[
\omega=dx_1\wedge d\theta_1+dx_2\wedge d\theta_2
\]
where $(\theta_1,\theta_2)$ are coordinates in $\bbT^2$. We can rewrite $dx_1$ and $dx_2$ in $(H,r)$ coordinates and it follows that
\[
\omega=r\left(\frac{\partial \xi_2}{\partial r} dH-\frac{\partial \xi_2}{\partial H} dr \right)\wedge d\theta_1-r\left(\frac{\partial \xi_1}{\partial r} dH-\frac{\partial \xi_1}{\partial H} dr \right)\wedge d\theta_2.
\]
In these coordinates we see that 
\[
E_i=\{(H,r,\theta_1,\theta_2): r=0, H\in]-a'_{i+1},a'_i[, (\theta_1,\theta_2)\ip \nu_i=0\}
\]
Hence restricted to $E_i$, $\omega$ becomes
\[
\omega_{|E_i}=r\frac{\partial \xi_2}{\partial r} dH\wedge d\theta_1-r\frac{\partial \xi_1}{\partial r} dH \wedge d\theta_2.
\]
Now we can use the fact that 
\[
H\in]-a'_{i+1},a'_i[, r\rightarrow 0 \implies \xi=\nu_i \log r+O(1), 
\]
to rewrite the above as 
\[
\omega_{|E_i}=\alpha_idH\wedge d\theta_1-\beta_i dH \wedge d\theta_2.
\]
where $\nu_i=(\beta_i,\alpha_i)$. The direction in $\bbT^2$ which is not fixed in $E_i$ is the direction perpendicular to $\nu_i$ so that we write $(\theta_1,\theta_2)=t(\alpha_i,-\beta_i)$. Substituting in $\omega_{|E_i}$ we see that
\[
\omega_{|E_i}=|\nu_i|^2dH\wedge dt.
\]
Integrating the above over $]-a'_{i+1},a'_i[\times ]0,2\pi[$ yields the desired result.
\end{proof}

Now we proceed to prove (\ref{r_relation}). Composing with $\mu^{-1}$ we see that we may instead show that  
\[
\log\frac{r\circ\mu^{-1}}{r\circ\mu^{-1}_{ALE}}
\]
is bounded. As we have seen, for both metrics $g_{ALE}$ and $g$, the $r$ factor in $\mu_{ALE}^{-1}$ and $\mu^{-1}$ are given by 
\[
r\circ\mu^{-1}=\gamma \prod_{i=1}^d l_i^{1/2},
\]
and 
\[
r\circ \mu^{-1}_{ALE}=\gamma_{ALE} \prod_{i=1}^d l_i^{1/2},
\]
respectively, for some smooth nowhere vanishing functions $\gamma$ and $\gamma_{ALE}$ in a neighbourhood of $\partial P$. Thus our function is  
\[
\log\frac{\gamma}{\gamma_{ALE}}
\]
which is clearly bounded as $\gamma_g$ and $\gamma_{ALE}$ are nowhere vanishing.
\end{proof}
The function $f$ defined in the above lemma satisfies a linear PDE. Indeed it turns out that $f$ is an axi-symmetric harmonic function on $\bbR^5$. By this we mean that $f$ is a harmonic function on $\bbR^5$ which only depends on $H$ and the distance to the $H$ axis, $r$, where $H$ is one of the coordinates in $\bbR^5$.
\begin{lemma}[Wright]
Let $f$ be such that $\mu=\mu_{ALE}-r^2f$, then
\[
f_{HH}+f_{rr}+\frac{3f_r}{r}=0.
\]
\end{lemma}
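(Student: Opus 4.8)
The plan is to turn the first-order relations between $\mu_0$ and $\xi_0$ into a single second-order linear equation for each component of $\mu_0$, and then read off the equation for $f$ by plugging in $\mu_0 = r^2 f$. Since the previous lemma already gives us that $\mu_0$ and $\xi_0$ are smooth on $\bar{\bbH}$, all the differentiations below are legitimate.

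First I would record the relations displayed in the proof of the previous lemma. Writing $\mu_0=(\mu_{0,1},\mu_{0,2})$ and $\xi_0=(\xi_{0,1},\xi_{0,2})$, the formula for $D\mu_0$ reads
\[
\frac{\partial \mu_{0,1}}{\partial H}=r\frac{\partial \xi_{0,2}}{\partial r},\qquad \frac{\partial \mu_{0,1}}{\partial r}=-r\frac{\partial \xi_{0,2}}{\partial H},
\]
together with the analogous pair relating $\mu_{0,2}$ to $\xi_{0,1}$. These are the only structural inputs needed.

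Second, I would eliminate $\xi_0$. Differentiating the first relation in $H$, the second in $r$, and forming the combination $\partial_{HH}\mu_{0,1}+\partial_{rr}\mu_{0,1}-\tfrac1r\partial_r\mu_{0,1}$, the terms $r\,\partial_{rH}\xi_{0,2}$ and $-r\,\partial_{Hr}\xi_{0,2}$ cancel by equality of mixed partials (this is exactly the regularity of $\xi_0$), and the remaining $\partial_H\xi_{0,2}$ terms cancel as well. This yields
\[
\frac{\partial^2 \mu_{0,1}}{\partial H^2}+\frac{\partial^2 \mu_{0,1}}{\partial r^2}-\frac1r\frac{\partial \mu_{0,1}}{\partial r}=0,
\]
and the identical manipulation applied to the $\mu_{0,2}$/$\xi_{0,1}$ pair gives the same equation. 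I would emphasize the crucial sign: the weight is $-1/r$, not the $+1/r$ of equation (\ref{PDE}); this reflects that $\mu_0$ plays the role of a conjugate quantity rather than a solution of the original three-dimensional equation.

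Third, I substitute $\mu_{0,j}=r^2 f_j$. A direct computation gives $\partial_{HH}(r^2 f)=r^2 f_{HH}$, $\partial_{rr}(r^2 f)=r^2 f_{rr}+4rf_r+2f$ and $\tfrac1r\partial_r(r^2 f)=rf_r+2f$, whence
\[
\frac{\partial^2 (r^2 f)}{\partial H^2}+\frac{\partial^2 (r^2 f)}{\partial r^2}-\frac1r\frac{\partial (r^2 f)}{\partial r}=r^2\Bigl(f_{HH}+f_{rr}+\frac{3}{r}f_r\Bigr).
\]
Dividing by $r^2$ on the open half-plane $\bbH$ and invoking continuity of $f$ on $\bar{\bbH}$ gives $f_{HH}+f_{rr}+\tfrac3r f_r=0$, which is the claim. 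The computation is routine; the only points demanding care are the commutation of the mixed partials of $\xi_0$ (supplied by the previous lemma) and the harmless division by $r^2$, valid in the interior and extended to the axis by continuity. I do not expect a genuine obstacle here once the preceding lemma is in hand.
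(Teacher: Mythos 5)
Your proof is correct and uses exactly the same two ingredients as the paper's, merely in the reverse order: the paper first expands the derivatives of $f=\mu_0/r^2$ to reduce the claim to $\mu_{0,HH}+\mu_{0,rr}-\tfrac{1}{r}\mu_{0,r}=0$ and then verifies this from the first-order relations between $\mu_0$ and $\xi_0$, whereas you derive that intermediate equation first and then substitute $\mu_0=r^2f$. The algebra is identical in both directions, so this is essentially the paper's argument.
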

\begin{proof}

Since
\[
f=\frac{\mu_0}{r^2}
\]
we have
\[
f_r=\frac{\mu_{0,r}}{r^2}-\frac{2\mu_0}{r^3}
\]
and 
\[
f_{rr}=\frac{\mu_{0,rr}}{r^2}-\frac{4\mu_{0,r}}{r^3}+\frac{6\mu_0}{r^4}.
\]
Also 
\[
f_{HH}=\frac{\mu_{0,HH}}{r^2}.
\]
We see that
\begin{equation}\label{f_intermediate}
f_{HH}+f_{rr}+\frac{3f_r}{r}=\frac{1}{r^2}\left( \mu_{0,HH}+\mu_{0,rr}-\frac{\mu_{0,r}}{r} \right).
\end{equation}
But we also have 
\[
\mu_{0,H}=r(\xi_{0,2,r},-\xi_{0,1,r})
\]
and
\[
\mu_{0,r}=-r(\xi_{0,2,H},-\xi_{0,1,H})
\]
therefore 
\[
\mu_{0,HH}=r(\xi_{0,2,Hr},-\xi_{0,1,Hr})
\]
and
\[
\mu_{0,rr}=-(\xi_{0,2,H},-\xi_{0,1,H})-r(\xi_{0,2,Hr},-\xi_{0,1,Hr}).
\]
Replacing in equation (\ref{f_intermediate}) the result of the lemma follows.
\end{proof}
Because $\mu(X)=P$ we must have $\mu\ip \nu_1 \geq 0$ and therefore defining $f_1=f\ip \nu_1$, we have
\[
f_1 \leq \frac{\mu_{ALE}\ip \nu_1}{r^2}.
\]
Now $\mu_{ALE}$ was explicitly calculated in \cite{as} and it is easy to see that $|\mu_{ALE}|\leq C\sqrt{H^2+r^2}$. It follows that there is a constant $C$ such that for any $w\in \bbR^5$,
\[
f_1(w)\leq \frac{C|w|}{r^2}.
\] 
Since $f_1$ is harmonic on $\bbR^5$ the mean value theorem states that,
\[
f_1(z)=\frac{1}{\text{vol}(\partial B(z,R))}\int_{\partial B(z,R)}f_1(w)dw.
\]
Here $B(z,R)$ denotes the ball with center $z$ and radius $R$ in $\bbR^5$ and $dw$ the induced euclidean volume on $\partial B(z,R)$. Therefore 
\[
f_1(z)=\frac{1}{AR^4}\int_{\partial B(z,R)}f_1(w)dw.
\]
for a universal constant $A$. We have  
\[
f_1(w)\leq C\frac{|w|}{r^2}\leq C'\frac{R}{r^2},
\]
where we think of $r$ as the distance to the $H$ axis in $\bbR^5$. Therefore 
\[
f_1(z)\leq \frac{C''}{R^3}\int_{\partial B(z,R)}\frac{dw}{r^2}.
\]
When $R$ is very large, the integral
\[
\int_{\partial B(z,R)}\frac{dw}{r^2}
\]
is of the same order of magnitude as 
\[
\int_{\partial B(0,R)}\frac{dw}{r^2}
\]
and a straightforward calculation shows that this integral is $O(R^2)$. 
\begin{lemma}
Let $B(R)$ be a the ball of radius $R$ in $\bbR^5$. Then there is a constant $C$ such that
\[
\int_{\partial B(R)}\frac{1}{r^2}dw\leq CR^2.
\]
\end{lemma}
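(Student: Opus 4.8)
The plan is to compute the integral essentially exactly by introducing spherical coordinates on $\partial B(R)$ adapted to the $H$-axis. Choose coordinates on $\bbR^5$ so that $H$ is the first coordinate and $\mathbf{y}\in\bbR^4$ denotes the remaining four; then the distance to the $H$-axis is precisely $r=|\mathbf{y}|$. A point of the sphere $\partial B(R)$ can be written as $(R\cos\phi,\,R\sin\phi\,\omega)$ with polar angle $\phi\in[0,\pi]$ and $\omega\in S^3\subset\bbR^4$, so that $r=R\sin\phi$. The point of this parametrization is that it separates the direction transverse to the axis (encoded by $\phi$) from the angular directions (encoded by $\omega$), which is exactly the geometry that the integrand $1/r^2$ sees.

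First I would record the induced surface measure in these coordinates, namely $dw=R^4\sin^3\phi\,d\phi\,d\Omega_3$, where $d\Omega_3$ is the standard measure on the unit $3$-sphere; this follows from the fact that the $\phi$-direction contributes a factor $R$ and each of the three $\omega$-directions contributes a factor $R\sin\phi$, the four being mutually orthogonal. (One can sanity-check the coefficient by integrating $1$ over the sphere and recovering the area of the $4$-sphere of radius $R$.) Substituting $r=R\sin\phi$, the angular variable factors out entirely and contributes the finite constant $\mathrm{vol}(S^3)=2\pi^2$, leaving
\[
\int_{\partial B(R)}\frac{1}{r^2}\,dw=2\pi^2R^2\int_0^\pi\sin\phi\,d\phi=4\pi^2R^2,
\]
so the asserted bound holds, in fact as an equality up to the constant $C=4\pi^2$, and the power $R^2$ drops out of the homogeneity of the measure.

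The only point that deserves comment, and the place where one might worry about an obstacle, is the behaviour near the poles $\phi=0,\pi$, where the sphere meets the $H$-axis and $1/r^2$ blows up. This is harmless precisely because the $H$-axis has codimension $4$ in $\bbR^5$: the surface measure carries the factor $\sin^3\phi$, which dominates the $\sin^{-2}\phi$ coming from the integrand and leaves the bounded factor $\sin\phi$. Thus the singularity is integrable and there is no genuine difficulty; the lemma is exactly the ``straightforward calculation'' invoked just above, and the dimension count (codimension $4$ versus the quadratic singularity) is the one thing the reader should keep in mind.
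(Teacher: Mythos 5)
Your proof is correct and is essentially the paper's own argument: the paper parametrizes $\partial B(R)$ by full hyperspherical angles with $r=R\sin\alpha$ and Jacobian $R^4\sin^3\alpha\,\sin^2\alpha_1\sin\alpha_2$, which is exactly your polar-angle decomposition with the three remaining angles written out instead of bundled into the $S^3$ measure. Your version is marginally cleaner in that it factors out $\mathrm{vol}(S^3)=2\pi^2$ and yields the exact constant $C=4\pi^2$, but the key cancellation $\sin^3\phi/\sin^2\phi=\sin\phi$ is the same in both.
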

\begin{proof}
A straightforward but perhaps tedious way to check the above is to parametrize the sphere of radius $R$ in $\bbR^5$. This can be done in the following way. Use the parameters $(\alpha,\alpha_1,\alpha_2,\alpha_3)$,
\[
\phi(\alpha,\alpha_1,\alpha_2,\alpha_3)=\begin{pmatrix}
                                                                 &R\cos \al \\
                                                                 &R\sin \al \cos \al_1\\\nonumber
                                                                 &R\sin \al \sin \al_1\cos \al_2\\ \nonumber
                                                                &R\sin \al \sin \al_1\sin \al_2\cos \al_3\\ \nonumber
                                                                &R\sin \al \sin \al_1\sin \al_2\sin \al_3\sin \al_3\\ \nonumber 
                                                                \end{pmatrix}
\]
In this parametrisation $r=R\sin \al$ and the integral becomes
\[
\int_{\mathcal{R}} \frac{R^4 \sin^3 \al \sin^2 \al_1 \sin \al_2}{R^2 \sin^2 \al}d\alpha d\alpha_1 d\alpha_2 d\alpha_3.
\]
on some bounded region $\mathcal{R}.$ The result follows.
\end{proof}
This then implies that $f_1$ must be bounded from above. Any bounded harmonic function on $\bbR^5$ is constant thus $f_1$ is constant. In the same way we show that $f\ip \nu_2$ is constant and since $\nu_1$ and $\nu_2$ are linearly independent we see that $f$ must be constant which in turn implies that
\[
\mu_0=r^2 v
\]
for some constant vector $v$ and given the relation between $\mu$ and $\xi$ this implies that
\[
\xi_0=H\nu
\]
where $\nu$ is a constant vector. Therefore either the metric $g$ is the ALE metric if $\nu=0$ or it is a Donaldson generalised Taub-NUT metric. 

It only remains to see that the constant vector $\nu$ must be in the cone determined by $\nu_1$ and $-\nu_d$. Write $\nu=(\al,\beta)$. Then 
\[
\mu=\mu_{ALE}+r^2\begin{pmatrix}
                                            -\beta\\
                                            \alpha
                                 \end{pmatrix}.           
\]
As we have seen $|\mu_{ALE}|\leq C \sqrt{H^2+r^2}$. Now fix a given $H$ and make $r$ tend to infinity we see that
\[
\mu\simeq r^2\begin{pmatrix}
                                            -\beta\\
                                            \alpha
                                 \end{pmatrix}.           
\]
as $r$ tends to infinity and therefore 
\[
\mu\ip \nu_1 \simeq-r^2\det (\nu_1,\nu).
\]
The fact that $\mu \ip \nu_1 > 0$ in the interior of $P$ implies that $\det(\nu,\nu_1)> 0$. In the same way we see that $\det(\nu,\nu_d)> 0$ and we are done.
\end{proof}

\section{Uniqueness given asymptotic behaviour}
The aim of this section is to show that the asymptotic behaviour of a scalar-flat K\"ahler toric metric on an unbounded toric manifold which is compatible with a complete toric complex structure determines the metric completely. 

We start by setting up some notation. Let $(X,\omega)$ be a strictly unbounded toric $4$-manifold with moment polytope $P$. Let $P_0$ be the convex polygonal region with only two unbounded sides coinciding with the unbounded sides of $P$ i.e. $P_0$ has only two normal vectors $\nu_1$ and $\nu_d$. We will assume further that these two unbounded edges meet at $0$ in $P_0$.

The convex set $P_0$ is the moment polytope of a toric orbifold $X_0$. This orbifold is of the form $\bbC^2/\Gamma$ for $\Gamma$ some finite subgroup of $U(2)$. We can endow $X$ and $X_0$ with K\"ahler structures. The manifold $X$ is a resolution of $X_0$ and there is a holomorphic map $\pi:X\rightarrow X_0$ which is $\bbT^2$ invariant. Outside a compact set in $X$ this map is the identity in action-angle coordinates.

In \cite{as} the authors show that for any $\nu\in \bbR^2$ satisfying
\begin{equation} \label{eq:condnu}
\det(\nu_1,\nu),\,\,\det(\nu_d,\nu)>0.
\end{equation}
there is a metric scalar-flat K\"ahler toric metric on $X$, the Donaldson generalised Taub-NUT metric with parameter $\nu$. We will denote this metric by $g^P_\nu$. The same parameter value $\nu$ defines a Donaldson generalised Taub-NUT orbifold metric with parameter $\nu$ on $X_0$ which we denote by $g^{P_0}_\nu$.

\begin{thm}\label{prescibed2}
Let $(X,\omega)$ be a strictly unbounded toric $4$-manifold endowed with a complete, compatible complex structure $J$ which is torus invariant.  Let $g$ be the corresponding toric metric determined by $\omega$ and $J$. The manifold $X$ is the resolution of toric compact orbifold $X_0=\bbC^2/\Gamma$.  Let $\pi:X\rightarrow X_0$ be the resolution map. If $g$ is scalar-flat and
\[
\left(\pi_*g-g^{P_0}_{\nu_0}\right)_{|P_0}(x)\rightarrow 0, \,\,\, x\rightarrow \infty
\]
then $g=g^P_{\nu_0}$.
\end{thm}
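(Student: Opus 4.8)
The plan is to reduce Theorem~\ref{prescibed2} to the classification in Theorem~\ref{unique} and then show that the asymptotic hypothesis pins down the parameter. First I would apply Theorem~\ref{unique}: since $g$ is scalar-flat K\"ahler, torus invariant and $J$ is complete, $g=g^P_\nu$ is a Donaldson generalised Taub-NUT metric for some $\nu$ (with $\nu=0$ allowed for the ALE case). From the proof of that theorem the moment map of $g$, expressed in the isothermal coordinates $(H,r)$, satisfies
\[
\mu=\mu_{ALE}+r^2 w(\nu),\qquad w(\nu)=(-\nu_2,\nu_1),
\]
and likewise $\mu_{\nu_0}=\mu_{ALE}+r^2 w(\nu_0)$ for the target metric, so that $\mu_{\nu_0}-\mu=r^2 w(\nu_0-\nu)$. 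The entire theorem therefore reduces to the single assertion $\nu=\nu_0$.

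Next I would translate the hypothesis. Because $\pi$ is the identity in action-angle coordinates outside a compact set, and because $g^P_{\nu_0}$ and $g^{P_0}_{\nu_0}$ are constructed from the same spectral data $\xi_{ALE}-H\nu_0$ and hence agree on the common unbounded end of $P$ and $P_0$ (the extra vertices produced by corner-chopping lie in the compact region), the assumption $(\pi_*g-g^{P_0}_{\nu_0})_{|P_0}(x)\to 0$ becomes the statement that the two polytope metrics $g_r^{\nu}=\Hess(u_\nu)$ and $g_r^{\nu_0}=\Hess(u_{\nu_0})$ have vanishing difference as $x\to\infty$ in $P$.

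The heart of the argument is then to recover $\nu$ as an asymptotic invariant of $g_r^\nu$. Using $\mu=\mu_{ALE}+r^2 w(\nu)$ together with $|\mu_{ALE}|\le C\sqrt{H^2+r^2}$, as $r\to\infty$ with $H$ bounded the point $x=\mu(H,r)$ runs to infinity along the ray $\bbR^+ w(\nu)$ (which lies in the interior cone of $P$, exactly by \eqref{eq:condnu}), with $|x|\sim r^2|\nu|$. Writing the polytope metric as $\Hess(u_\nu)=(r\det D\xi)\,(D\mu\,D\mu^{T})^{-1}$ and inserting the expansions of $D\xi$ and $D\mu$ along this ray, one finds that $\Hess(u_\nu)$ has one eigenvalue tending to $0$ in the direction $w(\nu)$ and one eigenvalue bounded away from $0$ in the orthogonal direction $\nu$; this is precisely the collapsing of the ALF end along the single direction $w(\nu)$, and the degenerating direction together with the surviving eigenvalue recover $\nu$. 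Since $g_r^{\nu}$ and $g_r^{\nu_0}$ agree at infinity, their degenerate directions and non-collapsing eigendata must coincide, forcing $w(\nu)=w(\nu_0)$ and hence $\nu=\nu_0$, i.e. $g=g^P_{\nu_0}$.

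The main obstacle is the rigorous form of this last step. The difficulty is twofold. First, the polytope metric genuinely degenerates at infinity, since $\det\Hess(u)=r^{-2}\to 0$, so one cannot pass naively from convergence of the metric tensors to convergence of $r$ or of the coordinates $\bz=H+ir$; one must isolate the single non-collapsing eigen-pair, which remains of order one, and control it uniformly. Second, the two metrics coordinatise $P$ by different, $\nu$-dependent maps $\bz^{(\nu)}$ and $\bz^{(\nu_0)}$, so a fixed far point $x$ is a bulk point ($r$ large, $H$ bounded) for one parameter but an edge-type region for the other; reconciling the two asymptotic expansions at the same $x$, and in particular proving that the surviving eigendirection equals $\nu$ uniformly as $x\to\infty$, is where the genuine analytic work lies. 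An alternative route, parallel to Theorem~\ref{unique}, is to re-run the Liouville argument with $g^P_{\nu_0}$ rather than $g_{ALE}$ as reference, obtaining $\mu_{\nu_0}-\mu=r^2\hat f$ with $\hat f$ an axi-symmetric harmonic function on $\bbR^5$, hence constant; the asymptotic hypothesis must then be upgraded from boundedness to the decay $\hat f\to 0$, yielding $\hat f\equiv 0$. Either way the crux is the same asymptotic estimate on the collapsing behaviour of the polytope metric.
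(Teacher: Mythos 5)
Your overall strategy is the paper's: invoke Theorem \ref{unique} to write $g=g^P_\nu$ and reduce everything to $\nu=\nu_0$; use that $\pi$ is the identity in action-angle coordinates outside a compact set to translate the hypothesis into $\Hess u-\Hess u_0\to 0$ at infinity; and recover $\nu$ from the asymptotics of the Hessian via the identity $\Hess u=D\xi\,D\xi^t/V$ with $V=r\det D\xi$ (your formula $(r\det D\xi)(D\mu\,D\mu^T)^{-1}$ is equivalent to the paper's Lemma \ref{Hess}) together with the explicit $\xi$ from \cite{as}. Your description of the limit --- one collapsing eigenvalue in the direction orthogonal to $\nu$, one surviving eigen-pair along $\nu$ --- is exactly the paper's rank-one limit $\Hess u(x)\to \nu\nu^t/\det(v,\nu)$ along sequences with $r(x)\to\infty$.

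However, you flag but do not close the step on which the whole argument hinges, and this is a genuine gap. A far point $x\in P$ carries two different isothermal coordinate systems, $(H,r)$ for $g$ and $(H_0,r_0)$ for the reference metric, and the two rank-one limits can only be compared if $r(x)\to\infty$ \emph{and} $r_0(x)\to\infty$ along one and the same sequence. The paper closes this with a short observation missing from your proposal: since $r=(\det\Hess u)^{-1/2}$ and $r_0=(\det\Hess u_0)^{-1/2}$, the hypothesis $\Hess u-\Hess u_0\to 0$ together with boundedness of $\Hess u$ gives $|\det\Hess u-\det\Hess u_0|\to 0$, hence $|r^{-2}-r_0^{-2}|\to 0$, so $r(x)\to\infty$ forces $r_0(x)\to\infty$; no uniform control of the non-collapsing eigen-pair and no restriction to $H$ bounded is needed. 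Your proposed alternative route is also not a way around this: since both $\mu$ and $\mu_{\nu_0}$ differ from $\mu_{ALE}$ by $r^2$ times a constant vector, your $\hat f$ is \emph{already known} to be the constant $w(\nu_0-\nu)$, so no new Liouville argument is available, and proving the decay hypothesis kills this constant is verbatim the same coordinate-reconciliation problem. Two further points: matching kernels of the limiting matrices yields only $\nu\parallel\nu_0$, not $w(\nu)=w(\nu_0)$ on the nose as you assert --- like the paper, you must then use the cone condition \eqref{eq:condnu} to exclude $\nu=-\nu_0$; and you omit the degenerate cases $\nu=0$ or $\nu_0=0$, where the limit is instead $a\nu_1\nu_1^t$ with $a$ possibly infinite and a separate contradiction argument, supplied in the paper, is required.
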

\begin{proof}
We know from theorem (\ref{unique}) that there is some $\nu\in \bbR^2$ satisfying condition \ref{eq:condnu} such that $g=g^P_{\nu}$. What we need to show is that $\nu=\nu_0$.

There is a function $u$ on $P$, the symplectic potential of $g$, such that $g$ is given by
\[
g=\begin{bmatrix}
\phantom{-}\Hess(u) & \vdots & 0\  \\
\hdotsfor{3} \\
\phantom{-}0 & \vdots & \Hess^{-1}(u)
\end{bmatrix}
\]
in action-angle coordinates. There is also a symplectic potential $u_0$ for the metric $g_{\nu_0}^{P_0}$,
\[
g_{\nu_0}^{P_0}=\begin{bmatrix}
\phantom{-}\Hess(u_0) & \vdots & 0\  \\
\hdotsfor{3} \\
\phantom{-}0 & \vdots & \Hess^{-1}(u_0)
\end{bmatrix}.
\]
Because the map $\pi$ is the identity outside a compact set the condition in the theorem says that 
\[
\Hess u-\Hess u_0\rightarrow 0
\]
at infinity in the polytope $P_0$.
\begin{lemma}\label{Hess}
Let $g$ be a K\"ahler toric metric on a symplectic toric $4$-fold $X$ with symplectic potential $u$. Let $(H,r)\in \bbH$ be isothermal coordinates for $g$ and $\mu:\bbH \rightarrow P$ be the coordinate change map and $\xi=(u_{x_1},u_{x_2})\circ \mu$ be defined as before. Then 
\[
\Hess u(x)=\frac{D\xi D\xi^{t}}{V}(\mu^{-1}(x))
\]
where $V=r\det{D\xi}$.
\end{lemma}
\begin{proof}
Let $\eta=(u_{x_1},u_{x_2})$ so that $\eta=\xi\circ\mu^{-1}$. We have
\[
\Hess u=D\eta=D\xi D\mu^{-1}
\]
Now as we have seen before
\[
D\mu=r\begin{pmatrix}
\xi_{2,r} & -\xi_{2,H}  \\
-\xi_{1,r} & \xi_{1,H}  
\end{pmatrix},
\] 
hence 
\[
D\mu^{-1}=\frac{D\xi ^t}{r\det D\xi}
\]
and the result follows.
\end{proof}

The inverse of Donaldson's version of Joyce's construction described in section \ref{joyce_construction} associates to $g$ isothermal coordinates $(H,r)$ and $\xi$ an axi-symmetric harmonic function $\bbR^3$ which we think of as a function in $\bbH$. We also have a coordinate change map $\mu:\bbH\rightarrow P$. Because $g=g^P_{\nu}$, we have an exact expression for $\xi$ from \cite{as}. We want to use that expression to study the asymptotic behaviour of $\Hess u$. We will prove the following lemma
\begin{lemma}
Let $g_\nu^P$ be a Donaldson generalised Taub-NUT metric on a symplectic toric $4$-fold $X$ with symplectic potential $u$. Let $(H,r)\in \bbH$ be isothermal coordinates for $g_\nu^P$. Then if $x$ tends to infinity in $P$ with $r(x)\rightarrow \infty$
\begin{itemize}
\item If $\nu\ne 0$ then 
\[
\Hess u(x)\rightarrow \frac{\nu\nu^t}{\det(v,\nu)},
\]
for some vector $v$.
\item If $\nu=0$ then 
\[
\Hess u(x)\rightarrow a\nu_1\nu_1^t,
\]
for some function $a$ and $a$ can take the value infinity in which case the above means that $|\Hess u|$ tends to infinity.
\end{itemize}
\end{lemma} 
\begin{proof}
Let $(H,r)\in \bbH$ be isothermal coordinates for $g_\nu^P$ and $\mu:\bbH \rightarrow P$ be the coordinate change map. 

We start with the $\nu \ne 0$ case. We will show that in this case
\[
\Hess u(x)=\frac{\nu\nu^t}{\det(v,\nu)}+\frac{A}{r(\mu^{-1}(x))}+O\left(\frac{1}{\rho^2(\mu^{-1}(x))}\right).
\]
where $A$ is a bounded matrix valued function on $P$ defined away from a compact set and $v$ is some vector. The result follows immediately from this.
From \cite{as} we have an explicit formula for the function  $\xi=(u_{x_1},u_{x_2})\circ \mu$. Namely
\begin{displaymath}
\xi_1 = \alpha_1\log(r)+\frac{1}{2}\sum_{i=1}^{d-1}(\alpha_{i+1}-\alpha_{i})\log\left( H_i+\rho_i\right)+\alpha H
\end{displaymath}
and
\begin{displaymath}
\xi_2 = \beta_1\log(r)+\frac{1}{2}\sum_{i=1}^{d-1}(\beta_{i+1}-\beta_{i})\log\left( H_i+\rho_i\right)+\beta H,
\end{displaymath}
where $\nu=(\alpha,\beta)$ and we use the same notation as before for $H_i$ and $\rho_i$. This implies
\[
D\xi=\begin{pmatrix}
\frac{\alpha_1}{r}+\sum_{i=1}^{d-1}\frac{(\alpha_{i+1}-\alpha_{i})r}{2\left( H_i+\rho_i\right)\rho_i} &\sum_{i=1}^{d-1}\frac{(\alpha_{i+1}-\alpha_{i})}{2\rho_i}+\alpha \\
\frac{\beta_1}{r}+\sum_{i=1}^{d-1}\frac{(\beta_{i+1}-\beta_{i})r}{2\left( H_i+\rho_i \right)\rho_i} &\sum_{i=1}^{d-1}\frac{(\beta_{i+1}-\beta_{i})}{2\rho_i}+\beta
\end{pmatrix}.
\]     
The map $\mu$ is proper and therefore when $x$ tends to infinity in $P$, $\rho$ also tends to infinity. Because
\begin{displaymath}
\frac{1}{ H_i+\rho_i}-\frac{1}{H+\rho}=O\left(\frac{1}{\rho^2}\right)
\quad
\text{and}
\quad
\frac{1}{\rho_i}-\frac{1}{\rho}=O\left(\frac{1}{\rho^2}\right),
\end{displaymath}
$D\xi$ can be written as
\[
\begin{pmatrix}
0&\alpha\\
0&\beta \\
\end{pmatrix}+
\frac{1}{r}\begin{pmatrix}
\alpha_1+\frac{(\alpha_d-\alpha_1)r^2}{2\rho(H+\rho)}
&\frac{(\alpha_d-\alpha_1)r}{2\rho}   \\
\beta_1+\frac{(\beta_d-\beta_1)r^2}{2\rho(H+\rho)}
&\frac{(\beta_d-\beta_1)r}{2\rho}
\end{pmatrix}
+ \frac{r}{\rho}O\left(\frac{1}{\rho^2}\right).
\]
Using the above, we can also write an expression for $V=r\det D\xi$ and study its asymptotic behaviour. 
\[
V=\det(\nu_1,\nu)\left(1-\frac{r^2}{2\rho(H+\rho)}\right)+\frac{\det(\nu_d,\nu)r^2}{2\rho(H+\rho)}+ \frac{\det(\nu_1,\nu_d)}{2\rho} +O\left(\frac{1}{\rho^2}\right) .
\]
The upshot of the above expressions is that
\begin{displaymath}
0\leq\frac{r^2}{2\rho(H+\rho)}\leq \frac{1}{2}.
\end{displaymath}
So this quantity tends to some value $a$ and it follows that, if $\nu\ne 0$, $V$ tends to $\det(v,\nu)$ where $v=(1-a)\nu_1+a\nu_d$. The asymptotic expression for $\Hess u$ follows from 
\[
\Hess u(x)=\frac{D\xi D\xi^t}{V}(\mu^{-1}(x))
\]
and 
\[
\begin{pmatrix}
0&\alpha\\
0&\beta \\
\end{pmatrix}
\begin{pmatrix}
0&0 \\
\alpha&\beta\\
\end{pmatrix}=\nu\nu^t.
\]

In the case where $\nu=0$, $D\xi$ can be written as
\[
\frac{1}{r}\begin{pmatrix}
\alpha_1+\frac{(\alpha_d-\alpha_1)r^2}{2\rho(H+\rho)}
&\frac{(\alpha_d-\alpha_1)r}{2\rho}   \\
\beta_1+\frac{(\beta_d-\beta_1)r^2}{2\rho(H+\rho)}
&\frac{(\beta_d-\beta_1)r}{2\rho}
\end{pmatrix}
+ \frac{r}{\rho}O\left(\frac{1}{\rho^2}\right).
\]
and $V$ is asymptotic to
\[
 \frac{\det(\nu_1,\nu_d)}{2\rho}
\]
and therefore $\Hess u$ is given by
\[
\frac{\rho}{\det(\nu_1,\nu_d)r^2}\left((\nu_1+a(\nu_d-\nu_1))(\nu_1+a(\nu_d-\nu_1))^t+b^2((\nu_d-\nu_1)(\nu_d-\nu_1)^t\right)+O\left(\frac{1}{\rho^2}\right)
\]
where 
\[
a=\lim \frac{r^2}{2\rho(H+\rho)},\qquad b=\lim \frac{r}{2\rho}.
\]
There are two cases to consider. The first case is when $\rho/r$ remains bounded. In this case $\Hess u$ converges to zero because $\rho/r^2$ tends to zero and we are done. If $\rho/r$ tends to infinity then $a=b=0$  and the result also follows.
\end{proof}
We want to apply the above lemma to both $g^P_\nu$ and $g^{P_0}_{\nu_0}$. In order to do that we must first find a sequence of points $x$ tending to infinity in $P$ such that both $r(x)$ and $r_0(x)$ tend to infinity where $r$ and $r_0$ are the first isothermal coordinate for $g^P_\nu$ and $g^{P_0}_{\nu_0}$ respectively. Let $x$ tend to infinity in $P$ with $r(x) \rightarrow \infty$. One can easily see that
\[
\left|\det \Hess u-\det \Hess u_0\right |\leq \left| \Hess u- \Hess u_0\right ||\Hess u|
\] 
and since $\Hess u$ is bounded (it tends to $a\nu\nu^t$) it follows that $\left|\det \Hess u-\det \Hess u_0\right |$ tends to zero. Now we know from the Donaldson's version of Joyce's construction in action-angle coordinates that
\begin{align}\nonumber
r&=\left(\det \Hess u\right)^{-1/2}\\ \nonumber
r_0&=\left(\det \Hess u_0\right)^{-1/2}.\\ \nonumber
\end{align}
Hence we see that
\[
\left|\frac{1}{r^2}-\frac{1}{r_0^2}\right|\rightarrow 0
\]
and it follows that $r_0(x)\rightarrow \infty$. 

Assume first that both $\nu$ and $\nu_0$ are different from zero. For this choice for $x$ we see that
\[
\Hess u-\Hess u_0 \rightarrow \frac{\nu\nu^t}{\det(v,\nu)}-\frac{\nu_0\nu_0^t}{\det(v_0,\nu_0)}
\]
for some vectors $v$ and $v_0$ and our assumption implies that 
\[
\frac{\nu\nu^t}{\det(v,\nu)}=\frac{\nu_0\nu_0^t}{\det(v_0,\nu_0)}.
\]
Now if $\nu\nu^t=a\nu_0\nu_0^t$ for some constant $a$ then $\nu$ is either $\nu_0$ or $-\nu_0$. This is because we must have for all $w \in \bbR^2$
\[
w^t\nu\nu^tw=aw^t\nu_0\nu_0^tw
\]
and therefore 
\[
|w\ip\nu|^2=|w\ip \nu_0|^2, \,\,\,  \forall \,w \in \bbR^2.
\]
Therefore $\nu$ and $\nu_0$ are proportional. This implies that in fact $\nu=\nu_0$ or $\nu=-\nu_0$. The condition $\det(\nu_1,\nu),\det(\nu_d,\nu)>0$ which both $\nu$ and $\nu_0$ must satisfy implies the result.

Now suppose that  $\nu=0$ and $\nu_0\ne 0$ is zero. Then again we see that if $r(x)$ tends to infinity so does $r_0(x)$ (we can use the fact that $\Hess u_0$ is bounded). We know that in this case
\[
\Hess u(x)\rightarrow a\nu_1\nu_1^t.
\]
If $a$ is infinity we immediately get a contradiction because $\Hess u-\Hess u_0$ would tend to infinity. If $a$ is finite we would conclude that $\nu_0$ is proportional to $\nu_1$ which is impossible (as this would give a non-complete metric). In the same way we get a contradiction if $\nu \ne 0$ and $\nu_0=0$.  
\end{proof}
\begin{rem}
The $\nu_0=0$ case of the above theorem was treated in by Wright. He proved that if $g$ is ALE then $g$ it is indeed $g^P_0$. Note that ALE implies the condition in our theorem but our condition is weaker. 
\end{rem}

\end{document}